\newtheorem{theorem}{Theorem}[section]
\newtheorem{definition}[theorem]{Definition}
\newtheorem{claim}[theorem]{Claim}
\newtheorem{lemma}[theorem]{Lemma}
\newtheorem{remark}[theorem]{Remark}
\newcommand{\qedsymb}{\hfill{\rule{2mm}{2mm}}}
\renewenvironment{proof}[1][]{\begin{trivlist}
\item[\hspace{\labelsep}{\bf\noindent Proof#1:\/}] }{\qedsymb\end{trivlist}}
\def\Z{{\mathbb{Z}}}
\def\mod{\mbox{mod}}
\newcommand{\eps}{\epsilon}
\renewcommand{\epsilon}{\varepsilon}
\begin{document}

\title{{\bf Symmetric Complete Sum-free Sets in Cyclic Groups}}

\author{
Ishay Haviv\thanks{School of Computer Science, The Academic College of Tel Aviv-Yaffo, Tel Aviv 61083, Israel.}
\and Dan Levy\thanks{School of Computer Science, The Academic College of Tel Aviv-Yaffo, Tel Aviv 61083, Israel.}
}


\maketitle

\begin{abstract}
We present constructions of symmetric complete sum-free sets in general finite cyclic groups.
It is shown that the relative sizes of the sets are dense in $[0,\frac{1}{3}]$, answering a question of Cameron,
and that the number of those contained in the cyclic group of order $n$ is exponential in $n$.
For primes $p$, we provide a full characterization of the symmetric complete sum-free subsets of $\Z_p$ of size at least $(\frac{1}{3}-c) \cdot p$, where $c>0$ is a universal constant.
\end{abstract}

\section{Introduction}

This paper studies symmetric complete sum-free subsets of finite cyclic groups.
Let $G$ be an abelian additive group.
A set $S \subseteq G$ is \emph{symmetric} if $x\in S$ implies $-x\in S$.
It is \emph{complete} if for every $z\in G\setminus S$ there exist $x,y\in S$ for which $x+y=z$,
and it is \emph{sum-free} if there are no $x,y,z\in S$ satisfying $x+y=z$.

Sum-free sets were first studied by Schur~\cite{Schur1916}, who proved in 1916 that the set of positive integers cannot be partitioned into a finite number of sum-free sets.
This result initiated a fruitful line of research, which now, over a century later, is central in the area of additive combinatorics, with applications to several areas of mathematics such as extremal graph theory, Ramsey theory, projective geometry, and model theory (see, e.g.,~\cite{Cameron84,Cameron87,Payne95,CalkinC98}).

A fundamental question on sum-free sets is how large can a sum-free subset of a group $G$ be.
It is easy to see that the size of a largest sum-free subset of $\Z$ contained in $[1,n]$ is $\lceil n/2 \rceil$, attained, for example, by the set of all odd numbers in this range.
For the cyclic group $G = \Z_p$ where $p$ is a prime, the well-known Cauchy-Davenport theorem~\cite{Cauchy,Davenport} yields that every sum-free set has size at most $\lfloor (p+1)/3 \rfloor$. The sets attaining this bound were explicitly characterized in the late sixties by Yap~\cite{Yap68,Yap70}, Diananda and Yap~\cite{Diananda69}, and Rhemtulla and Street~\cite{RhemtullaStreet70}.

\begin{theorem}[\cite{Yap68,Diananda69,Yap70,RhemtullaStreet70}]\label{thm:YapIntro}
Let $p$ be a prime, and let $S \subseteq \Z_p$ be a sum-free set of maximum size.
\begin{enumerate}
  \item If $p=3k+1$ for an integer $k$ then $|S|=k$ and $S$ is equal, up to an automorphism, to one of the following sets:
  \begin{eqnarray}\label{eq:sets_Yap}
  [k+1,2k],~~~~[k,2k-1],~~\mbox{ and for $k \geq 4$,~~}\{k\} \cup [k+2,2k-1] \cup \{2k+1\}.
  \end{eqnarray}
  \item If $p=3k+2$ for an integer $k$ then $|S|=k+1$ and $S$ is equal, up to an automorphism, to the set $[k+1,2k+1]$.
\end{enumerate}
\end{theorem}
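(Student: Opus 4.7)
The plan is to combine the Cauchy--Davenport inequality with the rigidity of sets of small doubling in $\Z_p$. Since $S$ is sum-free, $S$ and $S+S$ are disjoint, so $|S|+|S+S|\leq p$; Cauchy--Davenport gives $|S+S|\geq \min(p, 2|S|-1)$. Combining these yields $|S|\leq (p+1)/3$, i.e.\ $|S|\leq k+1$ when $p=3k+2$ and $|S|\leq k$ when $p=3k+1$. Set $d:=|S+S|-(2|S|-1)\geq 0$; the upper bound $|S+S|\leq p-|S|$ forces $d=0$ in the first case and $d\in\{0,1,2\}$ in the second.

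For $p=3k+2$ with $|S|=k+1$, Cauchy--Davenport is tight, and Vosper's theorem (applicable once $|S|\geq 3$) forces $S$ to be an arithmetic progression. The automorphism $x\mapsto c^{-1}x$, where $c$ is the common difference, reduces to $S=[a,a+k]$. Since $|S|+|S+S|=p$ and the two sets are disjoint, $S+S=\Z_p\setminus S$; equating $[2a,2a+2k]$ with $\Z_p\setminus[a,a+k]$ pins down $a=k+1$, and hence $S=[k+1,2k+1]$. For $p=3k+1$ with $d=0$ the same Vosper argument yields $S=[a,a+k-1]$ after normalization; the inclusion $[2a,2a+2k-2]=S+S\subseteq \Z_p\setminus [a,a+k-1]$ narrows $a$ to three consecutive values, which are identified in pairs by $x\mapsto -x$ to yield the two intervals $[k,2k-1]$ and $[k+1,2k]$.

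The main work is the case $p=3k+1$ with $d\in\{1,2\}$, which must produce the exceptional family. The key tool here is a Freiman-type structure theorem in $\Z_p$ (the $3k-4$ theorem of Freiman and its refinements for $\Z_p$ by Hamidoune--R{\o}dseth and Lev): when $|A+A|\leq 3|A|-4$ and $|A+A|$ is sufficiently far from $p$, the set $A$ is contained in an arithmetic progression of length at most $|A+A|-|A|+1$. Both hypotheses are met for our $S$ once $k$ exceeds a small threshold; small $k$ is dispatched by direct inspection. Normalizing the common difference to $1$, one concludes that $S\subseteq [a,a+k+d-1]$ is an interval of length $k+d$ with exactly $d$ missing positions.

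The remaining step is a finite case analysis. For each admissible $a$ and each choice of $d$ missing positions within $[a,a+k+d-1]$, one checks whether the resulting $S$ is sum-free. Working modulo the reflection $x\mapsto -x$, which swaps symmetric hole-patterns and flips the roles of the two endpoints, one shows that $d=1$ admits no sum-free set of size $k$, while $d=2$ admits precisely the family $\{k\}\cup[k+2,2k-1]\cup\{2k+1\}$ (well-defined and inequivalent to the AP cases exactly for $k\geq 4$). The principal obstacle is executing this enumeration cleanly: verifying exhaustiveness of the hole-configuration analysis and correctly accounting for the $x\mapsto -x$ identifications so that no set is missed or double-counted.
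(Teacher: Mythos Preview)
The paper does not prove Theorem~\ref{thm:YapIntro}; it is stated with citations to \cite{Yap68,Diananda69,Yap70,RhemtullaStreet70} as a known classical result, so there is no in-paper argument to compare your proposal against.

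That said, your plan is a sound modern route to the result. The Cauchy--Davenport bound and the Vosper step for the tight cases ($p=3k+2$, and $p=3k+1$ with $d=0$) are exactly right. For $d\in\{1,2\}$ the appeal to a $3k-4$-type theorem in $\Z_p$ is legitimate: here $|S+S|\le 2k+1\le 3|S|-4$ once $k\ge 5$, and $|S+S|\le p-k$ keeps you safely in the regime where the Hamidoune--R{\o}dseth/Lev structure theorem applies, so $S$ lands in a progression of length $k+d$ as you claim; the small-$k$ cases are finite checks. One caution on the final enumeration: you assert that $d=1$ yields nothing, but be sure to verify this against \emph{all} admissible translates $a$, not just the symmetric one, since removing a single element from an interval can still leave a sum-free set for certain positions of $a$---the point is that in each such instance the resulting $|S+S|$ exceeds $2k$, contradicting $d=1$. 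It is also worth noting that the cited 1968--1970 papers predate the $\Z_p$ versions of the $3k-4$ theorem, so the original proofs proceed by more elementary direct arguments; your plan is a valid but anachronistic shortcut.
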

\noindent
Note that for $p=3k+2$ the sum-free subsets of $\Z_p$ of maximum size are symmetric and complete, whereas for $p=3k+1$ only sets of the third type in~\eqref{eq:sets_Yap} have these properties.
For a general finite abelian group $G$, the largest size of a sum-free set was determined in 2005 by Green and Ruzsa~\cite{GreenRuzsa05}.

Another question of interest is that of counting sum-free sets.
Cameron and Erd{\H{o}}s conjectured in~\cite{CameronErdos90} that the number of sum-free subsets of the integers $[1,n]$ is $O(2^{n/2})$. This conjecture was confirmed independently by Green~\cite{Green04} and by Sapozhenko~\cite{Sapozhenko03} (see also~\cite{AlonRefine14}), improving on Alon~\cite{Alon91}, Calkin~\cite{Calkin90}, and Erd{\H{o}}s and Granville (see~\cite{CameronErdos90}).
Counting sum-free sets was also considered for finite abelian groups.
For the cyclic group $\Z_p$ with $p$ prime, an essentially tight upper bound of $2^{p(1/3+o(1))}$ was proved by Green and Ruzsa~\cite{GreenRuzsa04}, who later extended their result to general finite abelian groups~\cite{GreenRuzsa05}.
The task of counting sum-free sets that are maximal with respect to inclusion was addressed in~\cite{BaloghLST15b}.

\subsection{Our Contribution}

In this paper we present families of complete sum-free subsets of various sizes of the cyclic group $\Z_n$ of order $n$.
Such sets have found several applications in the literature (see, e.g.,~\cite{Cameron84,HansonS84,Cameron87,Cameron87b,CalkinC98} and Section~\ref{sec:motivation}).

The first family, studied in Section~\ref{sec:large}, consists of subsets of $\Z_n$ whose size is linear in $n$.
These sets include a central symmetric interval and additional elements located symmetrically on its left and right.
Specifically, for $s \in [\frac{n+3}{4},\frac{n-1}{3}]$ and a set of integers $T \subseteq [0,2t-1]$, where $t$ is an appropriately chosen parameter, we consider the set $S_T \subseteq \Z_n$ defined as
\[ S_T = [n-2s+1,2s-1] \cup \pm (s+T)\]
(see Definition~\ref{def:S_T} and Figure~\ref{fig:S_T}).
We prove sufficient and necessary conditions on the set $T$ for which the set $S_T$ is sum-free and complete (see Lemmas~\ref{lemma:sum-free} and~\ref{lemma:completeness}) and refer to the sets $T$ of size $t$ satisfying these conditions as {\em $t$-special} (see Section~\ref{sec:special}).
We use the analysis of the sets $S_T$ to establish a variety of results, described below.

Firstly, we provide a full characterization of the `large' symmetric complete sum-free subsets of $\Z_p$ for primes $p$. Namely, we show that for every $s \geq (\frac{1}{3}-c) \cdot p$ where $c>0$ is a universal constant and $p$ is a sufficiently large prime, the sets $S_T \subseteq \Z_p$ of size $s$ for $t$-special sets $T$ are, up to automorphisms, all the symmetric complete sum-free subsets of $\Z_p$ of size $s$ (see Theorem~\ref{thm:Final_Char}).
This significantly generalizes the characterization of the symmetric complete sum-free sets of maximum size derived from Theorem~\ref{thm:YapIntro}. Our proof makes use of a result due to Deshouillers and Lev~\cite{DLev08} on the structure of large sum-free subsets of $\Z_p$ (see Theorem~\ref{thm:DLev}; see also~\cite{Lev06,DeshouillersF06}).

Secondly, we consider counting aspects of our characterization result. It is shown that counting the symmetric complete sum-free subsets of $\Z_p$ of a given size reduces to counting $t$-special sets for a certain value of $t$. In the following theorem, $g(t)$ stands for the number of $t$-special sets (see Section~\ref{sec:counting_large}).

\begin{theorem}\label{thm:counting_complete}
There exists a constant $c>0$ such that for every sufficiently large prime $p$ and every $1 \leq r \leq c \cdot p$ the following holds.
\begin{enumerate}
  \item If $p=3k+1$ for an integer $k$ then the number of symmetric complete sum-free subsets of $\Z_p$ of size $k-2r$ is $\frac{p-1}{2} \cdot g(3r+1)$.
  \item If $p=3k+2$ for an integer $k$ then the number of symmetric complete sum-free subsets of $\Z_p$ of size $k-2r+1$ is $\frac{p-1}{2} \cdot g(3r)$.
\end{enumerate}
\end{theorem}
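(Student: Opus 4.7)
The plan is to derive the count from the structural characterization in Theorem~\ref{thm:Final_Char}, which states that every symmetric complete sum-free subset of $\Z_p$ of size $s \geq (\tfrac13 - c)p$ equals $\phi(S_T)$ for some $\phi \in \mathrm{Aut}(\Z_p) \cong \Z_p^*$ and some $t$-special set $T$. Since $|S_T| = (4s - p - 1) + 2t$, setting this equal to $s$ forces $t = \tfrac{p + 1 - 3s}{2}$. Substituting $s = k - 2r$ with $p = 3k + 1$ yields $t = 3r + 1$, and substituting $s = k - 2r + 1$ with $p = 3k + 2$ yields $t = 3r$, matching the values in the statement.

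Consequently, the symmetric complete sum-free subsets of $\Z_p$ of the prescribed size are exactly the sets $\phi(S_T)$ with $\phi \in \Z_p^*$ and $T$ a $t$-special set. This gives $(p - 1) \cdot g(t)$ parameterizing pairs, and the task is to determine how often a single set is produced. Because $S_T$ is symmetric, $(-\phi)(S_T) = \phi(-S_T) = \phi(S_T)$, so each target set is counted at least twice. If these are the only coincidences, the final count is $\tfrac{(p - 1) g(t)}{2} = \tfrac{p - 1}{2} \cdot g(t)$, as claimed.

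The main obstacle is to prove this uniqueness: if $a \cdot S_{T_1} = S_{T_2}$ for $a \in \Z_p^*$ and $t$-special $T_1, T_2$, then $a \in \{\pm 1\}$, and consequently $T_1 = T_2$ by the injectivity of the map $T \mapsto S_T$ (immediate, since $T = (S_T \cap [s, p - 2s]) - s$). The strategy is to use the rigidity that every $S_T$ avoids the symmetric interval $J = [-(s - 1), s - 1]$ of size $2s - 1$ (each element of $S_T$ has absolute value at least $s$ in the standard symmetric representatives). Taking complements of $a \cdot S_{T_1} = S_{T_2}$ gives $S_{T_1}^c \supseteq J \cup a^{-1} J$, hence
\[
|J \cap a^{-1} J| \;\geq\; 2(2s - 1) - (p - s) \;=\; 5s - p - 2,
\]
which is of order $\tfrac{2}{3} p$ when $s$ is close to $p/3$. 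On the other hand, a standard counting bound---for example $|J \cap a^{-1} J| \leq (2s - 1)^2 / p + O(\sqrt{p})$ for $a \notin \{\pm 1\}$, obtainable via Fourier analysis or a direct pigeonhole argument on the arithmetic progression $a^{-1} J$---gives an upper bound of order $\tfrac{4}{9} p$, yielding the required contradiction once $c$ is chosen small enough. Establishing this intersection bound rigorously, possibly with a short case analysis for exceptional values of $a$, is the technical heart of the argument.
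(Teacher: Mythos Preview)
Your overall framework is correct and matches the paper: invoke Theorem~\ref{thm:Final_Char}, compute $t$ in each case, observe that the relevant sets are exactly the dilations $d\cdot S_T$ for $t$-special $T$, and then argue that each such set is hit by exactly two pairs $(d,T)$ (namely $(\pm d,T)$). The reduction to the statement ``$a\cdot S_{T_1}=S_{T_2}$ with $a\in\Z_p^*$ forces $a\in\{\pm 1\}$'' is also exactly what the paper proves.

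Where your proposal diverges is the technical core, and here there is a gap. You assert an upper bound $|J\cap a^{-1}J|\le (2s-1)^2/p+O(\sqrt{p})$ for $a\notin\{\pm1\}$, calling it ``standard'' and ``obtainable via Fourier analysis or a direct pigeonhole argument.'' This bound is neither standard nor obviously true. A naive Fourier/Cauchy--Schwarz estimate on $\sum_x 1_J(x)1_J(ax)$ gives only the trivial bound $|J|$, since the error term must absorb the case $a=\pm1$ where the intersection equals $|J|$; there is no mechanism in your sketch that distinguishes $a\neq\pm1$ from $a=\pm1$ at the Fourier level. Moreover, the bound you wrote is far stronger than necessary: you only need $|J\cap a^{-1}J|<5s-p-2=|J|-2t$, and even this weaker inequality is left without proof.

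The paper avoids this issue entirely with an elementary interval argument. Rather than bounding $|J\cap a^{-1}J|$, it shows directly that for $2\le d\le (p-1)/2$ the dilated \emph{central interval} $E=d\cdot[p-2s+1,2s-1]\subseteq d\cdot S_{T'}$ must meet $D=[-(s-1),s-1]$, which is disjoint from every $S_T$. The key observation is that $E$ is an arithmetic progression of step $d$, and $|D|=2s-1\ge(\tfrac{2}{3}-5c)p\ge d$ for $c$ small. Hence if $D$ missed every element of $E$, it would also miss every gap between consecutive elements of $E$, forcing $D$ and the full span of $E$ (of size $d(4s-p-2)+1$) to be disjoint intervals in $\Z_p$; a size count then gives $p\ge |D|+d(4s-p-2)+1>p$, a contradiction. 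This is a two-line pigeonhole on intervals, with no analytic input.

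So your plan is salvageable, but as written the ``technical heart'' is an unproven (and overstated) analytic bound. The paper's route is both simpler and complete; if you want to finish your argument, the cleanest fix is to replace the Fourier claim with the paper's elementary step-size observation.
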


Lastly, we observe that there is an abundance of $t$-special sets and derive the following.
\begin{theorem}\label{thm:CountIntro}
There exists a constant $c>0$ such that for every sufficiently large $n$ there exist at least $2^{cn}$ symmetric complete sum-free subsets of $\Z_n$.
\end{theorem}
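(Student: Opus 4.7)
The plan is to apply the $S_T$ construction from Section~\ref{sec:large}. For a fixed $s$, the map $T \mapsto S_T$ is injective (since $T$ is recoverable as the appropriate shifted slice of $S_T$), and each $t$-special $T$ yields a symmetric complete sum-free subset of $\Z_n$. Hence the theorem reduces to showing that $g(t) \geq 2^{\Omega(n)}$ for a suitable choice of $s$ and $t$. I would pick $s$ close to the lower end of the admissible range, e.g.~$s = \lceil (n+3)/4 \rceil$, and take $t$ of order $n$: the geometric constraint that $s+T$ and $-(s+T)$ fit disjointly from the central interval $[n-2s+1, 2s-1]$ permits $t \leq (n - 3s + 1)/2 = \Theta(n)$.

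The heart of the proof is then the lower bound $g(t) \geq 2^{\Omega(t)}$. The characterizing conditions of $t$-specialness (Lemmas~\ref{lemma:sum-free} and~\ref{lemma:completeness}) combine a sum-free side, forbidding certain additive triples involving $T$, with a completeness side demanding that $T$ provide sum representations for the elements of $\Z_n \setminus S_T$. Observing that the completeness condition is monotone in $T$ (since $T \subseteq T'$ yields $S_T \subseteq S_{T'}$, so missing elements and their witnesses are inherited upwards), I would first fix a small ``core'' $T_0 \subseteq [0,2t-1]$, of size $O(1)$ or $o(t)$, such that $S_{T_0}$ is already complete, and then identify a large subset $U \subseteq [0,2t-1] \setminus T_0$, of size $|U| = \Omega(t)$, whose elements lie in a range where adjoining any of them to $T_0$ cannot create a forbidden additive triple. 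For any $A \subseteq U$ of the right size to make $|T_0 \cup A| = t$, the set $T_0 \cup A$ is then $t$-special, yielding at least $\binom{|U|}{t-|T_0|} = 2^{\Omega(t)}$ valid choices.

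Combining the two steps, the map $T \mapsto S_T$ produces $2^{\Omega(t)} = 2^{\Omega(n)}$ distinct symmetric complete sum-free subsets of $\Z_n$, as required. The principal obstacle is the construction of the core $T_0$ together with a free region $U$ that simultaneously satisfies both sides of the characterization: completeness pushes $T$ to be spread out, whereas sum-freeness restricts which elements may coexist. I expect the necessary slack to emerge by taking $s$ bounded away from $(n-1)/3$, since then the central interval is sufficiently small that many potentially dangerous sums land outside $S_T$ irrespective of the precise choice of $T$, while the completeness witnesses for elements of $\Z_n \setminus S_T$ can be arranged with only a small subset of $T$.
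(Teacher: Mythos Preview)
Your high-level reduction matches the paper: choose $s$ so that $t=(n-3s+1)/2=\Theta(n)$, note that distinct $T$ give distinct $S_T$, and reduce to $g(t)\geq 2^{\Omega(t)}$. One small correction: the choice $s=\lceil(n+3)/4\rceil$ does not satisfy the hypothesis $n\leq 7s/2-1$ of Lemma~\ref{lemma:completeness} (and hence of Theorem~\ref{thm:complete}); the paper instead takes $s$ to be the least integer with $n\leq 7s/2-1$ and the right parity, giving $s\approx 2n/7$ and $t\approx n/14$, still linear in $n$.

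The genuine gap is in your plan for $g(t)\geq 2^{\Omega(t)}$. You want a small core $T_0$ that already makes $S_{T_0}$ complete, and then a free region $U$ whose elements can be adjoined without creating a triple summing to $2t-1$. But the sum-free condition~\eqref{lemma:add1} is anti-monotone: if $2t-1\in T_0+T_0+T_0$ then the same holds for every superset. So your core must itself satisfy \emph{both} conditions of Lemmas~\ref{lemma:sum-free} and~\ref{lemma:completeness}. With $0\in T_0$, condition~\eqref{lemma:add2} forces $T_0+T_0$ to cover all of $[0,2t-1]\setminus(2t-1-T_0)$, a set of size $2t-|T_0|$; when $|T_0|$ is small this essentially forces $T_0+T_0$ to fill $[0,2t-1]$, and then $T_0+T_0+T_0\supseteq T_0+T_0$ will contain $2t-1$, killing condition~\eqref{lemma:add1}. (For $t=3,4$ one checks directly that no $T$ of size $<t$ satisfies both conditions.) Your remark that taking $s$ away from $(n-1)/3$ should create slack does not help either: once $n,s$ are fixed, Lemmas~\ref{lemma:sum-free} and~\ref{lemma:completeness} express everything purely in terms of $T$ and $t$, so moving $s$ just rescales $t$ without changing the shape of the obstruction.

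The paper resolves this tension by going in the opposite direction. Claim~\ref{claim:0in} shows that if $0\in T$ and $|T|=t$, then condition~\eqref{lemma:add1} alone already forces condition~\eqref{lemma:add2}; completeness comes for free once sum-freeness and the size are arranged. One then only needs many $T$ with $0\in T$, $|T|=t$, and $2t-1\notin T+T+T$. Claim~\ref{claim:number_T} does this by taking all nonzero elements of $T$ in $[\lceil 2t/3\rceil,2t-2]$ (so three nonzero elements sum to at least $2t$), and for each $i\in[\lceil 2t/3\rceil,t-1]$ choosing exactly one of $i$ and $2t-1-i$ (so two nonzero elements never sum to $2t-1$). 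The $\lfloor t/3\rfloor$ independent binary choices give $g(t)\geq 2^{\lfloor t/3\rfloor}$.
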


In Section~\ref{sec:small}, we present for a general $n$ a family of symmetric complete sum-free subsets of $\Z_n$ of various sizes.
The construction combines sets $\pm A, \pm B, C$, where $A$ is an interval, $B$ is an arithmetic progression, and $C$ is a symmetric interval (see Figure~\ref{fig:the_set}).
A key property of these sets, used to prove completeness and sum-freeness, is that the sums of the elements from $A$ with the elements from $B$ precisely fill the gaps between the elements of $-B$.

We use our construction to prove that the relative sizes of the symmetric complete sum-free subsets of finite cyclic groups are dense in $[0,\frac{1}{3}]$.
This answers an open question of Cameron~\cite{CameronBlog10} related to a natural probability measure on sum-free sets of positive integers (see~\cite{Cameron87b,Cameron87,Calkin98,CalkinC98} and Section~\ref{sec:motivation}).

\begin{theorem}\label{thm:DenseIntro}
For every $0 \leq \alpha \leq \frac{1}{3}$ and $\eps >0$, for every sufficiently large integer $n$ there exists a symmetric complete sum-free set $S \subseteq \Z_n$ whose size satisfies
\[ \alpha -\eps \leq \frac{|S|}{n} \leq \alpha+\eps.\]
\end{theorem}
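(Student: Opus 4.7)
The plan is to prove Theorem~\ref{thm:DenseIntro} directly from the parametric construction of Section~\ref{sec:small}, whose symmetric complete sum-free sets have the form $S = \pm A \cup \pm B \cup C$ with $A$ an interval, $B$ an arithmetic progression of common difference $d$, and $C$ a symmetric interval around $n/2$. The total size equals $2|A|+2|B|+|C|$ (up to minor double-counting at the boundary). The design of the construction is such that the sums $a+b$ with $a\in A$, $b\in B$ exactly fill the gaps between the elements of $-B$, so sum-freeness and completeness are equivalent to a handful of linear relations among $n$, $|A|$, $|B|$, $d$, and $|C|$. I would first isolate these relations as a clean feasibility system that one can solve for any admissible parameter tuple.

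Next, I would show that as the parameters vary over this system, the ratio $|S|/n$ sweeps out a dense subset of $[0,\tfrac{1}{3}]$. The natural knob is the common difference $d$ together with $|B|$: by increasing $d$ (which thins the progression) while adjusting $|A|$ and $|C|$ so that the gap-filling constraint is preserved, one obtains a family of valid sets whose sizes decrease almost continuously from near-maximal (roughly $n/3$) down to small. Concretely, for a target $\alpha\in[0,\tfrac13]$, I would choose integer parameters with $|B|\approx\beta_1(\alpha)\cdot n$, $|A|\approx\beta_2(\alpha)\cdot n$, $|C|\approx\beta_3(\alpha)\cdot n$, where the continuous functions $\beta_i$ are read off from the explicit size formula and solve $2\beta_1+2\beta_2+\beta_3=\alpha$. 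Rounding these to integers changes the relative size by $O(1/n)$, which is $<\eps$ for sufficiently large $n$. The endpoints $\alpha=0$ and $\alpha=\tfrac13$ are handled separately: for $\alpha=\tfrac13$ one uses sets achieving (up to $o(n)$) the Cauchy--Davenport bound from Theorem~\ref{thm:YapIntro}, while for $\alpha=0$ one takes, e.g., the smallest valid instance of the construction, whose size is $O(1)$.

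The main obstacle is ensuring that the chosen integer parameters simultaneously satisfy all the divisibility and interval-inclusion conditions imposed by the construction (so that $S$ is genuinely sum-free and complete in $\Z_n$) while also producing a ratio within $\eps$ of $\alpha$. Since $n$ is arbitrary, I cannot rely on $n$ having a convenient factorization, so the argument has to be robust against the residue of $n$ modulo the few small moduli appearing in the constraints. I would handle this by allowing the approximation to be carried out in a window of $O(1)$ consecutive values of the parameters and then verifying that at least one of them lands in the feasible lattice; since the size formula is linear in the parameters with bounded coefficients, this only perturbs $|S|/n$ by $O(1/n)$, which is absorbed into the $\eps$ tolerance. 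Once this feasibility-versus-approximation tradeoff is secured, the theorem follows immediately.
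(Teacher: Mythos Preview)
Your plan correctly identifies the Section~\ref{sec:small} construction as the right tool, but the proposed parametrization has a genuine gap. The construction forces the exact equality $n=4dk+6t-a$ with $a\in\{11,14\}$, where $|A|=d-1$, $|B|=k-3$, and $|C|\in\{2t+1,2t+2\}$. This constraint is \emph{quadratic} in the parameters, so the linear ansatz $|A|\approx\beta_2(\alpha)n$, $|B|\approx\beta_1(\alpha)n$, $|C|\approx\beta_3(\alpha)n$ cannot hold with $\beta_1,\beta_2>0$ (that would force $n\asymp n^2$). If instead you keep $d=O(1)$ fixed, the attainable densities $|S|/n$ are bounded below by roughly $1/(2d)$ and never approach $0$; if you let $d=\Theta(n)$, the side condition $|C|\ge d$ together with $6t\le n+a$ becomes infeasible. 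So neither the ``continuous $\beta_i$ plus rounding'' argument nor the ``$O(1)$ window'' feasibility claim works as stated. (Relatedly, for a \emph{given} large $n$ the smallest valid instance does not have size $O(1)$: since $4dk+6t\approx n$, at least one of $d,k,t$ is of order $\sqrt{n}$, and the minimum size is $\Theta(\sqrt{n})$; this is still fine for $\alpha=0$, but your description of that endpoint is inaccurate.)

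The paper resolves this by choosing $d=d_0\approx\sqrt{n}$ once (with $d_0\equiv 1\pmod 3$ to handle a divisibility issue) and then observing that the integer substitution $(k,t)\mapsto(k-3,\,t+2d_0)$ preserves the equality $n=4d_0k+6t-a$ exactly while changing $|S|$ by precisely $2(2d_0-3)$. Iterating this yields, for every sufficiently large $n$, an arithmetic progression of valid sizes with first term $O(\sqrt{n})$, common difference $O(\sqrt{n})$, and last term at least $n/3-O(\sqrt{n})$ (this is the content of Theorem~\ref{thm:various}); dividing by $n$ gives an $O(1/\sqrt{n})$-net in $[0,\tfrac13]$, from which Theorem~\ref{thm:DenseIntro} is immediate. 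The idea you are missing is this exact integer trade-off that keeps $n$ fixed, which replaces the rounding-and-feasibility argument entirely.
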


We also obtain, for a general $n$, a symmetric complete sum-free subset of $\Z_n$ of size proportional to $\sqrt{n}$. This matches, up to a multiplicative constant, the smallest possible size of such a set, and extends a result of Hanson and Seyffarth~\cite{HansonS84} that holds for cyclic groups $\Z_n$ where $n = m^2+5m+2$ for an integer $m$.

\begin{theorem}\label{thm:SmallIntro}
There exists a constant $c>0$ such that for every sufficiently large integer $n$ there exists a symmetric complete sum-free subset of $\Z_n$ of size at most $c \cdot \sqrt{n}$.
\end{theorem}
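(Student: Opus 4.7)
The plan is to instantiate the construction of $S = \pm A \cup \pm B \cup C$ described in Section~\ref{sec:small} with all three pieces of size of order $\sqrt{n}$. Given $n$, I would pick an integer $d$ close to $\sqrt{n}$, take $A$ to be an interval of length roughly $d$, take $B$ to be an arithmetic progression of common difference $d$ and length roughly $\sqrt{n}/2$, and take $C$ to be a short symmetric interval around $n/2$; the exact lengths are chosen so that $\pm B$, together with $\pm(A+B)$ and a short central arc produced from $C+C$, exhausts $\Z_n$. An AM--GM style balancing then gives $|A| + 2|B| + |C| = O(\sqrt{n})$, and this order of magnitude is in fact forced, since any complete sum-free set $S$ satisfies $n - |S| \le |S+S| \le \binom{|S|+1}{2}$, so $|S| = \Omega(\sqrt{n})$.

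The verification naturally splits into three parts. Symmetry is automatic from the shape of $\pm A$, $\pm B$, $C$. For sum-freeness, one checks that each pairwise sum $A+A$, $A+B$, $B+B$, $A+C$, $B+C$, $C+C$ (and the symmetric versions) lies outside $S$: by the positioning, $A+B$ and $-A-B$ fall into the gap arcs between the elements of $-B$ and of $B$ respectively, $C+C$ falls into a short arc around $0$, and the remaining sums can similarly be placed away from $\pm A \cup \pm B \cup C$ by tuning the starting points of $A$ and $B$. For completeness, the key property highlighted in Section~\ref{sec:small}, that the sums in $A+B$ precisely fill the gaps between consecutive elements of $-B$, ensures that $\pm B \cup \pm(A+B)$ is a pair of contiguous circular arcs; combined with the arc around $0$ obtained from $C+C$ (and the short neighborhoods around $\pm A$ handled by $A+(-A)$ or $A+A$), every element of $\Z_n \setminus S$ is realized as a sum from $S+S$.

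The main obstacle is that the construction must work for every sufficiently large $n$, not only for the special values $n = m^2 + 5m + 2$ covered by Hanson and Seyffarth~\cite{HansonS84}. For a generic $n$ the parameters $d$, $|B|$, $|C|$ cannot match $n$ exactly, so one must either absorb the $O(\sqrt{n})$ residue into the length of the central interval $C$, or adjust $B$ by adding a short tail, in each case preserving both the gap-filling property underlying completeness and the separation conditions underlying sum-freeness. A small case split based on $n$ modulo $d$ (or $d^2$) should cover all residues without spoiling the $O(\sqrt{n})$ bound, yielding a universal constant $c$ as claimed.
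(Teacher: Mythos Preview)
Your proposal is correct and takes essentially the same approach as the paper: Theorem~\ref{thm:SmallIntro} is derived from Theorem~\ref{thm:various}, whose proof instantiates the construction $S^{(n)}_{t,d,k}=\pm A\cup\pm B\cup C$ with $d,k,t$ all of order $\sqrt{n}$ and absorbs the residue into $t$ (i.e., into the length of the central interval $C$), exactly as you suggest. The only place the paper is more specific is in the parameter selection for a general $n$---it fixes $d\equiv 1\pmod 3$ and uses the parity of $n$ to choose between the admissible forms $4dk+6t-11$ and $4dk+6t-14$---which is marginally more delicate than a plain case split on $n\bmod d$, but entirely in the spirit of your outline.
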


\subsection{Applications}\label{sec:motivation}

We gather here several applications of our results.

\paragraph{Random sum-free sets.}

Cameron introduced in~\cite{Cameron87b} a probability measure on the sum-free subsets of the positive integers.
This measure is defined via a process that constructs a random sum-free set $R$ as follows:
Go over the positive integers in turn, and join to $R$, independently with probability $1/2$, every element that cannot be expressed as a sum of two elements that are already in $R$.

The question of identifying sets $M$ for which the probability that $M$ contains a random sum-free set is a constant bounded away from $0$ was addressed in~\cite{Cameron87b,Cameron87,Calkin98,CalkinC98}. For example, it was shown in~\cite{Cameron87b} that the probability of a random sum-free set to consist entirely of odd numbers is about $0.218$ (the exact constant is unknown). Conditioned on this event, the density\footnote{The density of a set of positive integers is the limit of the proportion of the numbers in $[1,n]$ that belong to the set, as $n$ tends to infinity. We assume in this discussion that the density of a random sum-free set exists almost surely and refer the interested reader to~\cite{CameronBlog10} for more details.} of a random sum-free set is almost surely $1/4$. The intuitive reason is that if all the elements of the set are odd, then every odd number is unconstrained in the process and is thus joined to the set independently with probability $1/2$.

For a subset $S$ of $\Z_n$, let $M_S$ be the set of all positive integers that are equal modulo $n$ to elements from $S$. Cameron proved in~\cite{Cameron87b} that for every complete sum-free subset $S$ of $\Z_n$ the probability that a random sum-free set is contained in $M_S$ is nonzero (see also an extension by Calkin~\cite{Calkin98}). Conditioned on this event, it can be seen that the density of a random sum-free set is almost surely $|S|/(2n)$. Cameron asked in~\cite{CameronBlog10} (see also~\cite{Cameron87}) whether the set \[\Big \{ \frac{|S|}{2n} ~\Big | ~\mbox{$S$ is a complete sum-free subset of $\Z_n$,~$n \geq 1$} \Big \}\]
is dense in the interval $[0,\frac{1}{6}]$. Our Theorem~\ref{thm:DenseIntro} answers this question in the affirmative.

\paragraph{Regular triangle-free graphs with diameter $2$.}
A nice application of symmetric complete sum-free sets of `small' size comes from the following graph theory question: For an integer $n$, what is the smallest $d$ for which there exists an $n$ vertex $d$-regular triangle-free graph of diameter $2$?
This question was studied by Hanson and Seyffarth~\cite{HansonS84}, who were motivated by a problem of Erd{\H{o}}s, Hajnal, and Moon~\cite{ErdosHM64} on $k$-saturated graphs.
It was observed in~\cite{HansonS84} that if $S$ is a symmetric complete sum-free subset of an abelian group $G$ then the Cayley graph associated with $G$ and $S$ is an $n$ vertex $d$-regular triangle-free graph of diameter $2$ for $n=|G|$ and $d=|S|$.
The completeness of $S$ easily yields a lower bound of $|S| \geq \sqrt{2|G|}-O(1)$.
As already mentioned, our Theorem~\ref{thm:SmallIntro} extends a result of~\cite{HansonS84} and shows that this lower bound is attained, up to a multiplicative constant, for every cyclic group $\Z_n$ (see~\cite{CP92} for a construction for a non-cyclic group).

\paragraph{Dioid partitions of groups.}
The motivation for the current work has arrived from the concept of dioid partitions of groups introduced by the authors in~\cite{HavivL17}.
For a group $G$, written here in multiplicative notation, a partition $\Pi$ of $G$ is a {\em dioid partition} if it satisfies the following properties:
\begin{enumerate}
  \item For every $\pi_1, \pi_2 \in \Pi$, $\pi_1 \pi_2$ is a union of parts of $\Pi$, where $\pi_1 \pi_2$ is a setwise product.
  \item There exists $e \in \Pi$ satisfying $e \pi = \pi e =  \pi$ for every $\pi \in \Pi$.
  \item For every $\pi \in \Pi$, $\pi^{-1} \in \Pi$, where $\pi^{-1} = \{g^{-1} \mid g \in \pi\}$.
\end{enumerate}
\noindent
It is shown in~\cite{HavivL17} that dioid partitions of a group $G$ naturally define algebraic structures known as dioids (see, e.g.,~\cite{GondranMinouxDioidBook2010}).
Dioid partitions of $G$ are provided, for example, by the set of all conjugacy classes of $G$ and by the set of all double cosets of any subgroup of $G$.
For a cyclic group of prime order these examples yield only the two `trivial' partitions
$\{ G\} $ and $\{ \{ g\} \mid g\in G \}$.
However, for a prime $p\geq 5$, every symmetric complete sum-free subset $S$ of $\Z_{p}$ yields the non-trivial $3$-part dioid partition
$\{\{0\},S,(S+S) \setminus \{0\}\}$ (see~\cite{HavivL17}). Our results in the current paper illuminate some of the structural and counting aspects of these partitions.

\section{Preliminaries}

\paragraph{Symmetry, completeness, and sum-freeness.}
For an additive group $G$ and sets $A,B \subseteq G$ we denote $A+B = \{x+y \mid x \in A,~y \in B\}$, $-A = \{-x \mid x \in A\}$, and $A-B = A+(-B)$. We also denote $\pm A = A \cup (-A)$. The set $A$ is said to be {\em symmetric} if $A = -A$. It is {\em complete} if $G \subseteq A \cup (A+A)$, and it is {\em sum-free} if $A \cap (A+A) = \emptyset$.
Note that each of these three properties is preserved by any automorphism of $G$.
Notice that $A$ is complete and sum-free if and only if $A+A = G \setminus A$.

The following claim will be used to prove sum-freeness and completeness of sets.
\begin{claim}\label{claim:sumfree_short}
Let $G$ be an additive group, let $G_1 \subseteq G$ be a set satisfying $G = G_1 \cup (-G_1)$, and let $S \subseteq G$ be a symmetric set. Then,
\begin{enumerate}
  \item If every $x,y \in G_1 \cap S$ satisfy $x+y \notin S$ then $S$ is sum-free.
  \item If $G_1 \setminus S \subseteq S+S$ then $S$ is complete.
\end{enumerate}
\end{claim}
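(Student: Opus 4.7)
Both parts rest on the same simple observation: the hypothesis $G = G_1 \cup (-G_1)$ says that every $g \in G$ has some sign $\epsilon \in \{+1,-1\}$ with $\epsilon g \in G_1$, while the symmetry $S = -S$ makes membership in $S$ invariant under negation. Together, these let me reduce any question about $S$ to the corresponding question about $G_1 \cap S$.

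Part 2 I would do directly. Pick any $z \in G \setminus S$ and choose $\epsilon \in \{+1,-1\}$ with $\epsilon z \in G_1$. Since $S$ is symmetric, $\epsilon z \notin S$, so $\epsilon z \in G_1 \setminus S \subseteq S + S$ by hypothesis. Writing $\epsilon z = a + b$ with $a, b \in S$, I get $z = (\epsilon a) + (\epsilon b)$, and $\epsilon a, \epsilon b \in S$ by symmetry. Hence $z \in S + S$, and $S$ is complete.

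For part 1 I would argue by contradiction: suppose $x, y, z \in S$ satisfy $x + y = z$. For each element pick a sign so that $\epsilon_x x$, $\epsilon_y y$, $\epsilon_z z \in G_1$; symmetry of $S$ places these three elements in $G_1 \cap S$. The relation $x + y = z$ is equivalent, after rearranging and possibly negating the whole equation, to the three identities $x + y = z$, $x + (-z) = -y$, and $(-z) + y = -x$ (each up to a global sign flip, which does not change membership in $S$). By pigeonhole, at least two of the signs $\epsilon_x, \epsilon_y, \epsilon_z$ agree; I then select the identity whose two summands come from that matching pair and apply a global sign flip if needed so that both summands are the $G_1$-representatives. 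The result is an equation $u + v = w$ with $u, v \in G_1 \cap S$ and $w \in \{\pm x, \pm y, \pm z\} \subseteq S$, contradicting the hypothesis that $u + v \notin S$.

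The argument is entirely elementary. The only mild subtlety is the sign bookkeeping in part 1, and even that is handled uniformly by the pigeonhole on the three signs $\epsilon_x, \epsilon_y, \epsilon_z$; I do not anticipate any real obstacle.
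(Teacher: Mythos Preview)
Your approach is the same as the paper's: both reduce to signed representatives in $G_1$ and then dispatch a short case analysis. Part~2 is literally the paper's argument, written with an $\epsilon$.

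In part~1 there is a small sign slip in your pigeonhole. The three rearrangements $x+y=z$, $x+(-z)=-y$, $(-z)+y=-x$ have summand pairs $(x,y)$, $(x,-z)$, $(-z,y)$: the first pair carries \emph{equal} signs on $x$ and $y$, but the other two carry \emph{opposite} signs on $(x,z)$ and $(y,z)$. Hence ``two of $\epsilon_x,\epsilon_y,\epsilon_z$ agree'' does not select a usable identity when the agreeing pair involves $z$ (e.g.\ if $\epsilon_x=\epsilon_z=+1$, the identity with $x,z$-summands needs $x$ and $-z$, not $x$ and $z$). The fix is immediate: apply the pigeonhole to $\epsilon_x,\epsilon_y,-\epsilon_z$ instead, or simply do the bare case split as the paper does (if $x,y\in G_1$ or $-x,-y\in G_1$ use the first identity; otherwise w.l.o.g.\ $x,-y\in G_1$ and branch on whether $z$ or $-z$ lies in $G_1$). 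With that correction the argument is complete and matches the paper's.
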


\begin{proof}
For the first item, let $x,y \in S$ and denote $z=x+y$. We prove that $z \notin S$. If $x,y \in G_1$ then by assumption $z \notin S$. If $-x,-y \in G_1$ then by assumption $-z \notin S$, which by symmetry of $S$ implies $z \notin S$. Otherwise, without loss of generality, $x,-y \in G_1$. Assume by contradiction that $z \in S$. If $z \in G_1$ then $x$, which equals the sum of $-y$ and $z$ is not in $S$, a contradiction. Otherwise, $-z \in G_1$ and thus $-y$, which equals the sum of $x$ and $-z$, is not in $S$, again a contradiction.

For the second item, let $z \in G \setminus S$. We prove that $z \in S+S$. If $z \in G_1$ then by assumption $z \in S+S$. Otherwise, $z \in -G_1$, thus $(-z) \in G_1 \setminus S$, implying that $-z \in S+S$, which by symmetry of $S$ implies that $z \in S+S$ as well.
\end{proof}

\paragraph{Cyclic groups.}
For an arbitrary integer $n \geq 1$, we consider the cyclic group $\Z_n = \{0,1,\ldots,n-1\}$, with addition modulo $n$. Every element $a \in \Z_n$ has infinitely many representatives in $\Z$, those of the form $a+m \cdot n$ for $m \in \Z$.
For integers $a \leq b$ we denote $[a,b] = \{ z \in \Z \mid a \leq z \leq b\}$. We use the same notation for intervals in $\Z_n$, viewing the elements of $[a,b]$ as representatives of elements of $\Z_n$.
Note that we use the same addition symbol for both integer addition and addition modulo $n$.
In case of possible ambiguity, we will explicitly mention what the notation means.
For a prime $p$, the group automorphisms of $\Z_p$ are realized as multiplication modulo $p$ by an element of $\Z_p \setminus \{0\}$.
For $d \in \Z_p \setminus \{0\}$ and $A \subseteq \Z_p$ the set $d \cdot A \subseteq \Z_p$ is said to be a {\em dilation} of $A$.

\paragraph{Large sum-free sets of cyclic groups of prime order.}
We state below a theorem of Deshouillers and Lev~\cite{DLev08} on the structure of large sum-free subsets of $\Z_p$ for a prime $p$.

\begin{theorem}[\cite{DLev08}]\label{thm:DLev}
For every sufficiently large prime $p$, every sum-free subset $S$ of $\Z_p$ of size $|S| \geq 0.318p$ is contained in a dilation of the interval $[|S|,p-|S|]$.
\end{theorem}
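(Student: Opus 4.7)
The plan is to combine the small-doubling property forced on $S$ by sum-freeness with inverse Freiman-type results in $\Z_p$. Since $S$ is sum-free, $S+S \subseteq \Z_p \setminus S$, hence $|S+S| \leq p - |S|$. For $|S| \geq 0.318\, p$ this yields the very strong small-doubling bound $|S+S| \leq (1+\lambda)|S|$ with $\lambda < 1.15$, which should be sharp enough to force a one-dimensional (arithmetic progression) structure rather than merely a higher-dimensional coset progression.

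My first step would be Fourier-analytic: show that such small doubling implies the existence of a non-trivial character $\chi \in \widehat{\Z_p}$ for which $|\widehat{1_S}(\chi)|$ is close to $|S|$. The standard Plancherel identity expresses $|S+S|$ as a quadratic form in the Fourier coefficients, and an extremal analysis should show that if every non-trivial coefficient is small, then $S$ behaves pseudo-randomly enough to give $|S+S|$ close to $\min(2|S|,p)$, contradicting the hypothesis. Taking $d = \chi^{-1}$ and passing from $S$ to $d \cdot S$, the dominant coefficient moves to the frequency $1$, and I would convert this spectral concentration into geometric concentration: there is an interval $I \subseteq \Z_p$ of length at most $p - |S|$ containing $d \cdot S$.

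The final step uses sum-freeness once more to pin down $I$. If $I = [a, a + \ell - 1]$ with $\ell \leq p - |S|$ contains $S' := d \cdot S$, then $S' + S'$ viewed as integers lies in $[2a, 2a + 2\ell - 2]$, and sum-freeness forces this set, reduced modulo $p$, to be disjoint from $I$. A boundary argument — produce a forbidden sum $s_1 + s_2 \in I$ whenever $I$ is not symmetric about $p/2$ — then shows that $a$ and $a + \ell - 1$ must be placed symmetrically around $p/2$, yielding $d \cdot S \subseteq [|S|, p - |S|]$ after possibly replacing $d$ by $-d$ to correct orientation.

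The principal obstacle is the first step, which requires very sharp quantitative Freiman-type control in $\Z_p$. The general Freiman--Ruzsa theorem only gives containment in a coset progression of dimension $O(K)$ where $K = |S+S|/|S|$; forcing the dimension down to $1$ (a genuine interval) rather than an arbitrary generalized arithmetic progression is exactly what the threshold $0.318\, p$ is calibrated to enable, and it requires the successive refinements developed by Freiman, Lev, and Deshouillers--Freiman. Pushing the threshold below $p/3$ while maintaining the tight interval conclusion, rather than a fattened version of it, is the delicate content of the cited theorem and would likely occupy the bulk of any complete proof.
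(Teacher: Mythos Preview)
The paper does not prove this theorem. It is quoted from Deshouillers and Lev~\cite{DLev08} in the Preliminaries section and used as a black box (specifically, in the proof of Lemma~\ref{lemma:Char}). There is therefore no proof in the paper to compare your proposal against.

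On the substance of your sketch: the high-level shape---small doubling from sum-freeness, then an inverse theorem to force interval structure, then sum-freeness again to center the interval---does match the strategy of the Lev / Deshouillers--Freiman / Deshouillers--Lev line of work. You also correctly isolate the real difficulty: getting genuinely one-dimensional structure (a short interval, not a generalized progression) from the doubling bound $|S+S| \le p - |S|$ when $|S|$ is only just above $0.318\,p$. However, your Step~1 as written is more heuristic than argument: a Plancherel extremality computation of the kind you describe does not by itself yield containment in an interval of the required length; the actual proofs use sharper tools specific to $\Z_p$ (refinements of $3k-4$-type theorems and a delicate case analysis near the $p/3$ threshold). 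Your final ``boundary argument'' is also underspecified: sum-freeness of $S'$ inside an interval $I$ does not on its own force $I$ to be centered at $p/2$---one must use that $|S'|$ is large relative to $|I|$ to produce the forbidden sum. None of this is wrong as an outline, but it is far from a proof, and in any case the paper makes no attempt at one.
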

\noindent
Theorem~\ref{thm:DLev} improves previous results of Lev~\cite{Lev06} and Deshouillers and Freiman~\cite{DeshouillersF06}.
Dainiak and Sisask independently proved that the constant $0.318$ in the theorem cannot get below $0.25$ (see, e.g.,~\cite{SisaskThesis}).
Finding the smallest constant for which the statement holds is an interesting open question.

\section{Large Symmetric Complete Sum-free Sets}\label{sec:large}

In this section we present and analyze a construction of `large' symmetric complete sum-free subsets of the cyclic group $\Z_n$.

\subsection{The Sets $S_T$}
Consider the following definition.
\begin{definition}\label{def:S_T}
Let $n$ and $s$ be integers for which $t=(n-3s+1)/2$ is a positive integer and $n \leq 4s-3$.
For a set $T \subseteq [0,2t-1]$, let $S_T \subseteq \Z_n$ be the set defined as
\[S_T = [n-2s+1,2s-1] \cup \pm (s+T).\]
\end{definition}

\begin{figure}[h]
\begin{center}
\includegraphics[width=6in]{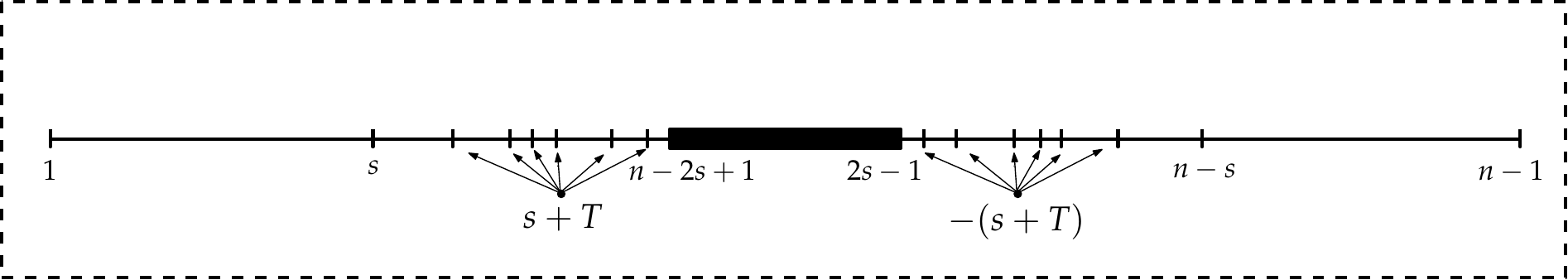}
\end{center}
\caption{An illustration of the elements of the set $S_T$.}
\label{fig:S_T}
\end{figure}

A few remarks are in order.
The set $S_T \subseteq \Z_n$ defined above depends on a parameter $s$ and a set $T$.
The definition requires the parameters $n$ and $s$ to satisfy
\[\frac{n+3}{4} \leq s \leq \frac{n-1}{3},\]
where the upper bound on $s$ follows from $t \geq 1$.
Notice that the interval $[n-2s+1,2s-1]$ included in $S_T$ is symmetric, thus the set $S_T$ is symmetric as well.
The set $s+T$ includes $|T|$ elements, which all belong to the interval $[s,s+2t-1]$. By the definition of $t$ we have $s+2t-1 = n-2s$, hence the elements of $s+T$ are located on the left of the interval $[n-2s+1,2s-1]$ (see Figure~\ref{fig:S_T}). By symmetry, the elements of $-(s+T)$ are located on its right. This implies that $S_T \subseteq [s,n-s]$ and that the size of $S_T$ is
\[ |S_T| = 4s-n-1+2|T|.\]
In the case of $|T|=t$, which will be of particular interest for us, we have $|S_T| = s$.

We consider now the question of what conditions on the set $T$ make the set $S_T$ a complete sum-free subset of $\Z_n$.
The following lemma gives a sufficient and necessary condition on $T$ for sum-freeness of $S_T$.

\begin{lemma}[Sum-freeness of $S_T$]\label{lemma:sum-free}
Let $n$ and $s$ be integers for which $t=(n-3s+1)/2$ is a positive integer and $n \leq 4s-3$, and let $T \subseteq [0,2t-1]$ be a set.
Then, the set $S_T \subseteq \Z_n$ is sum-free if and only if
\begin{eqnarray}\label{lemma:add1}
2t-1 \notin T+T+T,
\end{eqnarray}
where the addition in~\eqref{lemma:add1} is over the integers.
\end{lemma}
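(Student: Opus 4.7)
The plan is to prove both directions simultaneously by decomposing $S_T$ into the three pieces $I = [n-2s+1,2s-1]$, $L = s+T$, and $R = -(s+T)$, and by locating each pairwise sumset in $\Z_n$. Throughout, I will treat elements of $\Z_n$ via their natural representatives: from the hypothesis $(n+3)/4 \le s \le (n-1)/3$ one checks that $L \subseteq [s,n-2s]$, $R \subseteq [2s,n-s]$, and $I \subseteq [n-2s+1,2s-1]$ with no wraparound modulo $n$, so $S_T \subseteq [s, n-s]$. To avoid doing six separate pairwise checks, I apply Claim~\ref{claim:sumfree_short}(1) with the half $G_1 = [0,n-s]$ (which together with $-G_1 = [s,n-1] \cup \{0\}$ covers $\Z_n$ since $s < n/2$). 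Then $G_1 \cap S_T = I \cup L$, and sum-freeness of $S_T$ is equivalent to checking that each of $I+I$, $I+L$, and $L+L$ is disjoint from $S_T$.

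Next I locate the three relevant sumsets as integer intervals and reduce mod $n$. Using the bounds on $s$, the interval $I+I$ lies in $[2n-4s+2,\,4s-2]$, which modulo $n$ becomes the symmetric neighborhood of $0$ given by $[2n-4s+2,\,n-1]\cup[0,\,4s-n-2]$; since $4s-n-2 < s$ and $2n-4s+2 > n-s$ (both following from $s \leq (n-1)/3$), this is disjoint from $S_T \subseteq [s,n-s]$. For $I + L$, every sum lies in $[s+(n-2s+1),\,(n-2s)+(2s-1)] = [n-s+1,\,n-1]$, which again misses $S_T$. So neither $I+I$ nor $I+L$ contributes any obstruction, regardless of $T$.

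It remains to analyze $L+L = 2s+(T+T)$, which as an integer set lies in $[2s,\,2s+4t-2] = [2s,\,2n-4s]$. Within this range, the interval $[2s,n-s]$ meets $S_T$ only in $R$ (since $I$ ends at $2s-1$ and any element of $L$ would have to satisfy $s+\tau \geq 2s$ with $\tau \le 2t-1 < s$, impossible because $s \ge (n+3)/4$ forces $s > 2t-1$). Thus $(L+L) \cap S_T = (L+L) \cap R$. Writing $R = \{n-s-\tau_3 : \tau_3 \in T\}$, the equation
\[
2s + \tau_1 + \tau_2 = n - s - \tau_3 \qquad (\tau_1,\tau_2,\tau_3 \in T)
\]
simplifies to $\tau_1+\tau_2+\tau_3 = n-3s = 2t-1$. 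Hence $(L+L)\cap R=\emptyset$ is exactly the condition $2t-1 \notin T+T+T$ in integer arithmetic, giving both directions of the equivalence.

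The only step requiring care is verifying that the representative-based computations of the six sumsets avoid unintended wraparound; this is handled once and for all by the inequalities $(n+3)/4 \le s \le (n-1)/3$, which guarantee both $s > 2t - 1$ and $4s - n - 2 < s < n - s < 2n - 4s + 2$. Given these, the rest is a bookkeeping exercise, with the genuinely combinatorial content concentrated in the identification of $(L+L)\cap R$ with solutions to $\tau_1+\tau_2+\tau_3 = 2t-1$ in $T$.
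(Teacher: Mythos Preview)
Your overall strategy is the same as the paper's, but the reduction via Claim~\ref{claim:sumfree_short} is mis-executed. With $G_1=[0,n-s]$ you assert $G_1\cap S_T=I\cup L$, yet $R=-(s+T)\subseteq[2s,n-s]\subseteq G_1$, so in fact $G_1\cap S_T=I\cup L\cup R=S_T$ and the claim gives no reduction at all. Concretely, your argument never addresses the sumsets $I+R$, $L+R$, and $R+R$. Two of these are harmless by symmetry ($I+R=-(I+L)$ and $R+R=-(L+L)$, and $S_T$ is symmetric), but $L+R=(s+T)-(s+T)\subseteq\pm[0,2t-1]$ is a genuinely separate case; it is disjoint from $S_T\subseteq[s,n-s]$ because $2t-1<s$ (this is exactly the check the paper performs), and without it the ``only if'' direction survives but the ``if'' direction does not.

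The fix is minor: either take $G_1=[0,2s-1]$ instead (then $-G_1=\{0\}\cup[n-2s+1,n-1]$ still covers $\Z_n$ since $n\le 4s-3$, while $R\subseteq[2s,n-s]$ is now disjoint from $G_1$, so $G_1\cap S_T=I\cup L$ as you wanted), or simply add the one-line verification that $L+R\subseteq\pm[0,2t-1]$ misses $S_T$. With either correction your proof is essentially the paper's, the only difference being that you package the symmetry reductions through Claim~\ref{claim:sumfree_short} rather than invoking symmetry case by case.
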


\begin{proof}
Denote $A = [n-2s+1,2s-1]$, and recall that $S_T = A \cup \pm (s+T) \subseteq [s,n-s]$.
Let us consider the elements of $S_T+S_T$.
Observe that
\begin{eqnarray}\label{A+A}
A+A = [2n-4s+2,4s-2] = [0,4s-n-2] \cup [2n-4s+2,n-1],
\end{eqnarray}
where for the second equality we have used $n \leq 4s-3$.
By $t \geq 1$ we have
\[4s-n-2 = s - (n-3s+1) -1= s-2t-1 <s,\]
hence, using the symmetry of $A$ and $S_T$, it follows that $A+A$ is disjoint from $S_T$.

Denote $B = [s,s+2t-1]$, and notice that $s+T \subseteq B$. The set
\[A+B = [n-s+1, 3s+2t-2] = [n-s+1, n-1] = -[1,s-1]\]
is disjoint from $S_T$, because $S_T$ is symmetric and has no intersection with $[1,s-1]$.
This implies that the sum of $A$ and $s+T$ is disjoint from $S_T$ as well.

It remains to consider the sums of pairs of elements in $\pm (s+T)$. The sum of $s+T$ and $-(s+T)$ is contained in $\pm [0,2t-1]$, which is disjoint from $S_T$
because \[2t-1<2t = n-3s+1<s.\]

We turn to show that the condition $2t-1 \notin T+T+T$ holds if and only if the sum of every two elements in $s+T$ does not belong to $S_T$. This would imply that this condition is equivalent to the sum-freeness of $S_T$ and thus would complete the proof of the lemma.

First, assume that $2t-1 \notin T+T+T$, and let $s+\ell_1$ and $s+\ell_2$ be two elements of $s+T$ where $\ell_1, \ell_2 \in T$.
Set $\ell_3 = 2t-1-(\ell_1+\ell_2)$, and note that the assumption $2t-1 \notin T+T+T$ implies that $\ell_3 \notin T$.
The sum of $s+\ell_1$ and $s+\ell_2$ is
\begin{eqnarray}\label{eq:sum_of_2_elem}
2s+\ell_1+\ell_2 = 2s+(2t-1-\ell_3) = -(n-2s-2t+1+\ell_3) = -(s+\ell_3),
\end{eqnarray}
so by symmetry of $S_T$ it suffices to show that $s+\ell_3 \notin S_T$.
If $\ell_3 \geq 0$ then $\ell_3 \in [0,2t-1]$, implying $s+\ell_3 \in B$. However, $B \cap S_T = s+T$, so by $\ell_3 \notin T$ it follows that $s+\ell_3$ is not in $S_T$, as required.
Otherwise, $\ell_3 < 0$, and we claim that in this case $s+\ell_3 \in [0,s-1]$. Indeed, $s+\ell_3 < s$, and by $\ell_3 \geq -(2t-1)$ we obtain that
\[s+ \ell_3 \geq s-(2t-1) = 4s-n > 0.\]
Since the interval $[0,s-1]$ is disjoint from $S_T$, it follows that $s+\ell_3$ is not in $S_T$, and we are done.

For the other direction, assume that $2t-1 \in T+T+T$, that is, there exist $\ell_1, \ell_2, \ell_3 \in T$ such that $\ell_3 = 2t-1-(\ell_1+\ell_2)$. By the definition of $S_T$, the elements $s+\ell_1$, $s+\ell_2$ and $s+\ell_3$ are in $S_T$. As in~\eqref{eq:sum_of_2_elem}, the sum of the first two is $-(s+\ell_3)$,
which by symmetry of $S_T$ does belong to $S_T$. Hence, the sum of two elements of $s+T$ belongs to $S_T$, as required.
\end{proof}

We now consider the completeness of $S_T$, that is, the property that every element of $\Z_n$ belongs to either $S_T$ or to $S_T+S_T$ (or both). The following lemma provides a sufficient and necessary condition on the set $T$, assuming that $s$ is sufficiently large compared to $n$.
We use here the notation $\min(T)$ for the smallest integer in $T$.

\begin{lemma}[Completeness of $S_T$]\label{lemma:completeness}
Let $n$ and $s$ be integers for which $t=(n-3s+1)/2$ is a positive integer and $n \leq 7s/2-1$, and let $T \subseteq [0,2t-1]$ be a nonempty set.
Then, the set $S_T \subseteq \Z_n$ is complete if and only if
\begin{eqnarray}\label{lemma:add2}
[0,2t-1+\min(T)] \setminus (2t-1-T) \subseteq T+T,
\end{eqnarray}
where the addition in~\eqref{lemma:add2} is over the integers.
\end{lemma}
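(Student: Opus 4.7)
The plan is to invoke Claim~\ref{claim:sumfree_short}(2) with the half $G_1 := [0, \lfloor n/2 \rfloor]$, which reduces completeness of $S_T$ to the containment $G_1 \setminus S_T \subseteq S_T + S_T$. Using $S_T \subseteq [s, n-s]$ together with the inequalities $n - 2s \leq \lfloor n/2 \rfloor < 2s - 1$ (both consequences of $n \leq 4s - 3$), a direct check yields
\[ G_1 \setminus S_T = \{0\} \cup [1, s-1] \cup \bigl( s + ([0, 2t-1] \setminus T) \bigr), \]
and symmetry of $S_T$ gives $0 \in S_T + S_T$. The task thus reduces to deciding when every element of $[1, s-1] \cup (s + ([0, 2t-1] \setminus T))$ is a sum of two elements of $S_T$.

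To analyze this, I would expand $S_T + S_T$ as the union of the six pairwise sums of the three constituents $A := [n-2s+1, 2s-1]$, $s + T$, and $-(s+T)$. From the proof of Lemma~\ref{lemma:sum-free}, $A + A = \pm[0, s - 2t - 1]$, and direct computation yields $A + (-(s+T)) = \bigcup_{\tau \in T} [2t - \tau, s - 1 - \tau] \subseteq [1, s-1]$ and, modulo $n$, $(-(s+T)) + (-(s+T)) = (s + 2t - 1) - (T + T) \subseteq [s - 2t + 1, s + 2t - 1]$. The remaining three sums $A + (s+T)$, $(s+T) + (-(s+T)) = T - T$, and $(s+T) + (s+T) = 2s + (T+T)$ are contained in $[n-s+1, n-1]$, $\pm[0, 2t-1]$, and $[2s, 2n - 4s]$ respectively. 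The hypothesis $n \leq 7s/2 - 1$ rewrites as $s \geq 4t$, which forces $s - 2t \geq 2t$ (thereby separating $\pm[0, 2t-1]$ from the interval $[s - 2t, s + 2t - 1]$) and places the ranges $[n - s + 1, n - 1]$ and $[2s, 2n - 4s]$ entirely outside of $G_1$. Consequently, $A + A$ covers $[0, s - 2t - 1]$ unconditionally, and choosing $\tau = \min T$ shows that $A + (-(s+T))$ covers the strip $[s - 2t, s - 1 - \min T]$.

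The residual set still to be covered is
\[ R := [s - \min T, s - 1] \cup \bigl(s + ([0, 2t-1] \setminus T)\bigr), \]
and the above case analysis shows that the only sum able to contribute to $R$ is $(-(s+T)) + (-(s+T)) = (s + 2t - 1) - (T + T)$. Substituting $z = (s + 2t - 1) - w$, membership of $z$ in this sum is equivalent to $w \in T + T$, and the map $z \mapsto w$ restricts to a bijection from $R$ onto $\bigl([0, 2t - 1] \setminus (2t - 1 - T)\bigr) \cup [2t, 2t - 1 + \min T]$: the piece $s + ([0, 2t-1] \setminus T)$ of $R$ corresponds via $w = 2t - 1 - i$ (for $i \in [0,2t-1] \setminus T$) to the first summand, while the piece $[s - \min T, s - 1]$ of $R$ corresponds via $w = s + 2t - 1 - z$ to the second. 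Since $2t - 1 - T \subseteq [0, 2t - 1]$ is disjoint from $[2t, 2t - 1 + \min T]$, the union equals $[0, 2t - 1 + \min T] \setminus (2t - 1 - T)$, and hence $R \subseteq S_T + S_T$ is equivalent to $\eqref{lemma:add2}$, proving both directions of the lemma. The main obstacle I anticipate is the careful bookkeeping needed to confirm that the hypothesis $n \leq 7s/2 - 1$ genuinely isolates the contribution of $(-(s+T)) + (-(s+T))$ to $R$, ruling out any overlap from the other five pairwise sums; once this is in place, the algebra of the bijection $z \leftrightarrow w$ is routine.
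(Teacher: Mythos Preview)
Your proposal is correct and follows essentially the same approach as the paper's proof. Both arguments (i) use $A+A$ together with $A-(s+\min T)$ to cover $[0,s-1-\min T]$, (ii) verify by a case analysis of the six pairwise sums that the only contributor able to reach the residual region $[s-\min T,\,n-2s]\setminus(s+T)$ is $-(s+T)-(s+T)=(s+2t-1)-(T+T)$, and (iii) finish with an affine substitution to obtain~\eqref{lemma:add2}; your framing via Claim~\ref{claim:sumfree_short} with $G_1=[0,\lfloor n/2\rfloor]$ and the bijection $z\mapsto w=(s+2t-1)-z$ is just a slightly different packaging of the same computation the paper carries out directly.
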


\begin{proof}
Denote $m = \min(T)$ and $A = [n-2s+1,2s-1]$ as before.
By~\eqref{A+A}, we have
\begin{eqnarray}\label{eq:lem_com1}
[0,4s-n-2] \subseteq A+A \subseteq S_T+S_T.
\end{eqnarray}
In addition, we have $s + m \in s+T \subseteq S_T$ and
\[(s+m)+A = [n-s+m+1,3s+m-1] = -[n-3s-m+1,s-m-1].\]
By symmetry of $S_T$ this implies that
\begin{eqnarray}\label{eq:lem_com2}
[n-3s-m+1,s-m-1] \subseteq S_T+S_T.
\end{eqnarray}
Observe that
\begin{eqnarray}\label{eq:lem_com3}
n-3s-m+1 \leq n-3s+1 \leq 4s-n-1,
\end{eqnarray}
where the second inequality follows from the assumption $n \leq 7s/2-1$.
Combining~\eqref{eq:lem_com1},~\eqref{eq:lem_com2} and~\eqref{eq:lem_com3}, we obtain that
\[ [0,s-m-1] \subseteq S_T + S_T.\]
Since the interval $A$ starts at $n-2s+1$, using the symmetry of $S_T$, it follows that
\[\Z_n \subseteq S_T \cup (S_T + S_T)
 \mbox{ ~~if and only if~~ } [s-m,n-2s] \subseteq S_T \cup (S_T + S_T).\]

The interval $[s-m,n-2s]$ does not intersect the following sets:
\begin{itemize}
  \item $A$, because $n-2s < n-2s+1$;
  \item $-(s+T)$, because $-(s+T) \subseteq [n-s-(2t-1),n-s]$ and
  \[n-2s < 2s = n-s-(2t-1);\]
  \item $A+A$, by~\eqref{A+A} and the fact that $4s-n-2 = s-2t-1 < s-(2t-1) \leq s-m$;
  \item $(s+T) + (s+T)$, because this set is contained in $[2s, 2s+4t-2]$, and $n-2s < 2s$;
  \item $(s+T) - (s+T)$, because this set is contained in $\pm [0, 2t-m-1]$, and \[2t-m-1 = n-3s-m < s-m;\]
  \item $(s+T)+A$, because this set is contained in \[s+[m,2t-1]+A = [s+m,s+2t-1]+A = [n-s+m+1,3s+2t-2],\] and $n-2s < n-s+m+1$;
  \item $-(s+T)+A$, because this set is contained in \[-(s+[m,2t-1]+A) = [n-3s-2t+2, s-m-1],\] and $s-m-1 < s-m$.
\end{itemize}

Therefore, the only way for the elements of $[s-m,n-2s]$ to be in $S_T \cup (S_T + S_T)$ is to belong to either $s+T$ or to
\[ -(s+T)-(s+T) = -2s-(T+T) = n-2s-(T+T).\]
The condition \[[s-m,n-2s] \subseteq (s+T) \cup (n-2s-(T+T))\]
is equivalent to
\[ [0,n-3s+m] \subseteq (T+m) \cup (n-3s+m-(T+T)),\]
which by the definition of $t$ is equivalent to
\[ [0,2t-1+m] \subseteq (T+m) \cup (2t-1+m-(T+T)),\]
and the latter, using the fact that $[0,a] = a-[0,a]$, is equivalent to the condition in the lemma, so we are done.
\end{proof}

\subsection{Special Sets}\label{sec:special}

We now turn to identify the sets $T$ for which $S_T$ is a complete sum-free subset of $\Z_n$ of size $s$ (see Definition~\ref{def:S_T}).
Lemmas~\ref{lemma:sum-free} and~\ref{lemma:completeness} supply the required conditions on $T$ for sum-freeness and completeness of $S_T$.
Recall that the size of $S_T$ is $s$ whenever $|T|=t$.
This leads us to the notion of {\em $t$-special sets} given below.

\begin{definition}\label{def:special}
For an integer $t \geq 1$ we say that a set $T \subseteq [0,2t-1]$ is {\em $t$-special} if it satisfies
\begin{enumerate}
  \item $|T| = t$,
  \item $2t-1 \notin T+T+T$, and
  \item $[0,2t-1+\min(T)] \setminus (2t-1-T) \subseteq T+T$.
\end{enumerate}
Note that the addition is over the integers.
\end{definition}

The following simple claim shows that in case that $0 \in T$ the first and second conditions in Definition~\ref{def:special} imply the third.
\begin{claim}\label{claim:0in}
For every integer $t \geq 1$ and a set $T \subseteq [0,2t-1]$ satisfying $0 \in T$,
if $|T|=t$ and $2t-1 \notin T+T+T$ then $T$ is $t$-special.
\end{claim}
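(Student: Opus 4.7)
The plan is to observe that the hypothesis $0 \in T$ makes Conditions~1 and~2 of Definition~\ref{def:special} alone sufficient to force Condition~3. Since $\min(T) = 0$, the third condition simplifies to $[0,2t-1] \setminus (2t-1-T) \subseteq T+T$, and since $0 \in T$ we trivially get $T \subseteq T+T$ (every $a \in T$ can be written as $0+a$). Hence it would suffice to prove the set-theoretic identity $[0,2t-1] \setminus (2t-1-T) = T$.

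The pivotal step will be to rule out $2t-1 \in T+T$. If one could write $2t-1 = a+b$ with $a,b \in T$, then because $0 \in T$ we would obtain $2t-1 = 0+a+b \in T+T+T$, contradicting Condition~2. Equivalently, this says $T \cap (2t-1-T) = \emptyset$, since any element of this intersection would give a pair $a,\,2t-1-a \in T$ summing to $2t-1$.

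I would then close the argument by a simple cardinality count: $T$ and $2t-1-T$ are both subsets of the $2t$-element interval $[0,2t-1]$, both of size $t$, and by the previous step they are disjoint, so together they partition $[0,2t-1]$. Consequently $[0,2t-1] \setminus (2t-1-T) = T \subseteq T+T$, which is exactly Condition~3. I do not foresee any genuine obstacle; the entire content of the claim is the single observation that $0 \in T$ together with Condition~2 forbids $2t-1 \in T+T$, after which Condition~3 becomes automatic via a cardinality argument.
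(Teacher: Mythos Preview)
Your proposal is correct and follows essentially the same approach as the paper's proof: both observe that $0\in T$ together with $2t-1\notin T+T+T$ forces $T$ and $2t-1-T$ to be disjoint, and then a cardinality count shows they partition $[0,2t-1]$, whence $[0,2t-1]\setminus(2t-1-T)=T\subseteq T+T$. The paper phrases the partition argument in terms of the pairs $\{\ell,\,2t-1-\ell\}$ for $\ell\in[0,t-1]$, but the content is identical to yours.
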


\begin{proof}
Assume that $0 \in T$, $|T|=t$ and $2t-1 \notin T+T+T$. We claim that for every $\ell \in [0,t-1]$ exactly one of the elements $\ell$ and $2t-1-\ell$ belongs to $T$. Indeed, the elements $\ell$ and $2t-1-\ell$ cannot both belong to $T$ as their sum, together with $0 \in T$, is $2t-1$, which does not belong to $T+T+T$.
In addition, if $T$ does not contain one of $\ell$ and $2t-1-\ell$ for some $\ell \in [0,t-1]$ then its size cannot reach $t$. It follows that every $\ell \in [0,2t-1]$ for which $2t-1-\ell \notin T$ satisfies $\ell = \ell+0 \in T+T$. This implies that $[0,2t-1] \setminus (2t-1-T) \subseteq T+T$, hence $T$ is $t$-special.
\end{proof}

\begin{remark}
It is easy to notice that for every $t \geq 1$ there exist $t$-special sets. For example, the set $\{0\} \cup [t,2t-2]$ and the set of even elements in $[0,2t-1]$ are $t$-special, as is easy to verify using Claim~\ref{claim:0in}.
\end{remark}

Combining Lemmas~\ref{lemma:sum-free} and~\ref{lemma:completeness} with the fact that $|S_T|=s$ if and only if $|T|=t$, we obtain the following result.

\begin{theorem}\label{thm:complete}
Let $n$ and $s$ be integers for which $t=(n-3s+1)/2$ is a positive integer and $n \leq 7s/2-1$, and let $T \subseteq [0,2t-1]$ be a set.
Then, $T$ is $t$-special if and only if $S_T$ is a complete sum-free subset of $\Z_n$ of size $s$.
\end{theorem}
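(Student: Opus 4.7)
The plan is to show that Theorem~\ref{thm:complete} follows essentially formally from the three ingredients already in place: Lemma~\ref{lemma:sum-free}, Lemma~\ref{lemma:completeness}, and the size formula $|S_T|=4s-n-1+2|T|$ derived in the discussion following Definition~\ref{def:S_T}. There is no new additive-combinatorial content to extract; the work is purely bookkeeping, matching each condition in the definition of $t$-special (Definition~\ref{def:special}) with the corresponding property of $S_T$.

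Concretely, I would first observe that the hypotheses $t=(n-3s+1)/2\in\Nset_{\geq 1}$ and $n\leq 7s/2-1$ are exactly what is needed to invoke both Lemma~\ref{lemma:sum-free} and Lemma~\ref{lemma:completeness}: indeed, $n\leq 7s/2-1$ implies $n\leq 4s-3$ in the relevant range (since $7s/2-1\leq 4s-3$ whenever $s\geq 4$, and smaller $s$ are ruled out by $t\geq 1$ combined with $n\leq 7s/2-1$). Next, I would record that $|S_T|=s$ is equivalent to $|T|=t$, by solving $4s-n-1+2|T|=s$ and using $t=(n-3s+1)/2$. With these preliminaries, the two directions amount to the following chain:
\begin{itemize}
\item[$(\Rightarrow)$] If $T$ is $t$-special, then $|T|=t$ gives $|S_T|=s$; condition $(2)$ of Definition~\ref{def:special} combined with Lemma~\ref{lemma:sum-free} gives sum-freeness of $S_T$; and condition $(3)$ combined with Lemma~\ref{lemma:completeness} (applicable since $|T|=t\geq 1$, so $T$ is nonempty) gives completeness.
\item[$(\Leftarrow)$] Conversely, if $S_T$ is a complete sum-free subset of $\Z_n$ of size $s$, then the size formula yields $|T|=t\geq 1$, so $T$ is nonempty and conditions $(2)$ and $(3)$ follow from the `only if' parts of Lemmas~\ref{lemma:sum-free} and~\ref{lemma:completeness} respectively.
\end{itemize}

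Since there is no substantive obstacle, the only point requiring a moment's care is verifying that the hypothesis $n\leq 7s/2-1$ of Lemma~\ref{lemma:completeness} indeed implies the weaker hypothesis $n\leq 4s-3$ of Lemma~\ref{lemma:sum-free} in the parameter regime covered by the theorem, so that both lemmas may be applied simultaneously. This is a straightforward inequality check, and once it is in place the theorem follows by concatenating the three equivalences above.
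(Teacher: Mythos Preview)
Your proposal is correct and follows essentially the same approach as the paper, which simply states that the theorem follows by combining Lemmas~\ref{lemma:sum-free} and~\ref{lemma:completeness} with the equivalence $|S_T|=s \iff |T|=t$. Your write-up is in fact more careful than the paper's: you explicitly verify that the hypothesis $n\leq 7s/2-1$ forces $s\geq 4$ and hence $n\leq 4s-3$, so that Lemma~\ref{lemma:sum-free} applies, a point the paper leaves implicit.
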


\subsection{Characterizing the Large Symmetric Complete Sum-free Sets in $\Z_p$}

Consider the cyclic group $\Z_p$ for a prime $p$. We provide a characterization, based on Definitions~\ref{def:S_T} and~\ref{def:special}, of the symmetric complete sum-free subsets of $\Z_p$ of a given size $s$ for large values of $s$. By `large' we mean that $s \geq (\frac{1}{3}-c) \cdot p$ for a universal constant $c>0$ (the proof will give $s \geq 0.318p$). Recall that the maximum possible size of such a set is $\lfloor (p+1)/3 \rfloor$, making the restriction on $s$ quite natural.

\begin{theorem}\label{thm:Final_Char}
There exists a constant $c>0$ such that for every sufficiently large prime $p$ and an integer $s$, for which $t=(p-3s+1)/2$ is a positive integer satisfying $t \leq c \cdot p$, the following holds.
The symmetric complete sum-free subsets of $\Z_p$ of size $s$ are precisely all the dilations of the sets $S_T$ for $t$-special sets $T \subseteq [0,2t-1]$.
\end{theorem}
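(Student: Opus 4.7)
My plan is to handle the two directions of the characterization separately. The forward direction is essentially immediate from Theorem~\ref{thm:complete}: if $T \subseteq [0, 2t-1]$ is $t$-special, then (after checking that the hypothesis $p \leq 7s/2 - 1$ holds, which I will ensure by taking $c$ small enough so that $s$ is close to $p/3$) Theorem~\ref{thm:complete} gives that $S_T$ is a complete sum-free subset of $\Z_p$ of size $s$; it is symmetric by construction, and any dilation of $S_T$ preserves all four properties, since dilations are group automorphisms of $\Z_p$.

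For the converse, let $S \subseteq \Z_p$ be symmetric, complete, and sum-free with $|S| = s$. I would first choose $c > 0$ small enough that $t \leq c p$ forces $s = (p - 2t + 1)/3 \geq 0.318\, p$, which activates Theorem~\ref{thm:DLev} and yields some $d \in \Z_p \setminus \{0\}$ with $S \subseteq d \cdot [s, p-s]$. Since symmetry, completeness, sum-freeness, and size are all preserved by replacing $S$ with $d^{-1} \cdot S$, I may assume from the outset that $S \subseteq [s, p-s]$.

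The heart of the argument is the following Minkowski-sum observation, which I expect to be the main engine. Viewing sums of elements of $S$ as ordinary integers, $S + S \subseteq [2s, 2p - 2s]$, which reduces modulo $p$ to $[2s, p-1] \cup [0, p - 2s]$. Every element $z$ of the central interval $[p - 2s + 1, 2s - 1]$ satisfies $p - 2s < z < 2s$, so $z$ avoids this union. Hence $S + S$ is disjoint from $[p - 2s + 1, 2s - 1]$, and completeness forces $[p - 2s + 1, 2s - 1] \subseteq S$. Once this is established, the rest is bookkeeping.

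Finally, I would decompose $[s, p - s] = [s, p - 2s] \sqcup [p - 2s + 1, 2s - 1] \sqcup [2s, p - s]$ and use the identity $-[s, p - 2s] = [2s, p - s]$ in $\Z_p$ together with $S = -S$ to write $S = [p - 2s + 1, 2s - 1] \cup \pm (s + T)$, where $T := (S \cap [s, p-2s]) - s \subseteq [0, 2t - 1]$. Counting elements gives $|T| = t$, so $S = S_T$, and Theorem~\ref{thm:complete} applied in the reverse direction then implies that $T$ is $t$-special. The only genuine hurdles are the disjointness step above and the calibration of $c$ to stay in the Deshouillers--Lev regime; everything else is careful accounting with intervals.
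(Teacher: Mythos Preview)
Your proposal is correct and follows essentially the same route as the paper's own proof: the forward direction via Theorem~\ref{thm:complete}, and the converse via Theorem~\ref{thm:DLev} to reduce to $S \subseteq [s,p-s]$, then the completeness argument forcing $[p-2s+1,2s-1] \subseteq S$, and finally the bookkeeping that identifies $S$ with some $S_T$ with $|T|=t$, so that Theorem~\ref{thm:complete} yields $t$-speciality. The paper makes the constant explicit ($c=0.023$, chosen so that $\tfrac{1}{3}(1-2c)=0.318$), but otherwise the arguments coincide step for step.
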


Note that Theorem~\ref{thm:Final_Char} characterizes the symmetric complete sum-free subsets of $\Z_p$ of size $s$ for every possible size $s$ of such a set satisfying
\begin{eqnarray}\label{eq:cond_s}
\frac{p(1-2c)+1}{3} \leq s \leq \frac{p-1}{3},
\end{eqnarray}
where $c>0$ is the constant in Theorem~\ref{thm:Final_Char}.
Indeed, the size of every symmetric sum-free subset of $\Z_p$, for a prime $p>2$, is even.
For an even $s$ satisfying~\eqref{eq:cond_s} the number $t=(p-3s+1)/2$ is an integer with $1 \leq t \leq c \cdot p$.

The fact that the sets $S_T$ for $t$-special sets $T \subseteq [0,2t-1]$ are symmetric complete sum-free subsets of $\Z_p$ already follows from Theorem~\ref{thm:complete}. This immediately implies that their dilations have these properties as well. Hence, to complete the proof of Theorem~\ref{thm:Final_Char} it suffices to show that these are the only sets with those properties. This is the goal of the following lemma.

\begin{lemma}\label{lemma:Char}
There exists a constant $c>0$ such that for every sufficiently large prime $p$ and an integer $s$, for which $t=(p-3s+1)/2$ is a positive integer satisfying $t \leq c \cdot p$, the following holds.
Every symmetric complete sum-free subset of $\Z_p$ of size $s$ is a dilation of a set $S_T$ where $T \subseteq [0,2t-1]$ is $t$-special.
\end{lemma}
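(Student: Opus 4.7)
The plan is to combine the structural theorem of Deshouillers and Lev (Theorem~\ref{thm:DLev}) with a short direct argument about sumsets of intervals in $\Z_p$.

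First I would fix the constant $c>0$ small enough that the hypothesis $t \leq c \cdot p$, combined with $s = (p-2t+1)/3$, forces both $s \geq 0.318\,p$ (the threshold for Theorem~\ref{thm:DLev}) and $p \leq 7s/2 - 1$ (the hypothesis of Theorem~\ref{thm:complete}); any sufficiently small $c$ does the job once $p$ is large. Applying Theorem~\ref{thm:DLev} to the sum-free set $S$ yields some $d \in \Z_p \setminus \{0\}$ with $S \subseteq d \cdot [s, p-s]$. The map $x \mapsto d^{-1}x$ is an automorphism of $\Z_p$ and hence preserves symmetry, completeness, sum-freeness, and size, so after replacing $S$ by its dilation $\tilde S := d^{-1} \cdot S$ I may assume $\tilde S \subseteq [s, p-s]$.

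The key observation is a routine sumset computation. For any $a,b \in [s, p-s]$, the integer sum $a+b$ lies in $[2s, 2p-2s]$; reducing modulo $p$ shows that $a+b$ as an element of $\Z_p$ lies in $[2s, p-1] \cup [0, p-2s]$. Hence the ``core'' interval $[p-2s+1, 2s-1]$ is disjoint from $\tilde S + \tilde S$, and since $\tilde S$ is complete this interval must be entirely contained in $\tilde S$.

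Having secured the core, the structure is essentially forced. I would set $U := \tilde S \cap [s, p-2s]$ and $T := U - s \subseteq [0, 2t-1]$, using the identity $p - 2s = s + 2t - 1$. Symmetry of $\tilde S$ gives $\tilde S \cap [2s, p-s] = -U$, so together with the previous step one obtains $\tilde S = (s+T) \cup [p-2s+1, 2s-1] \cup -(s+T) = S_T$. The size constraint $|\tilde S| = s$ forces $|T| = t$, and then Theorem~\ref{thm:complete} applies in the reverse direction: since $S_T = \tilde S$ is a complete sum-free subset of $\Z_p$ of size $s$, the set $T$ must be $t$-special. This yields $S = d \cdot S_T$ with $T$ a $t$-special subset of $[0, 2t-1]$, as required.

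I do not expect a serious obstacle in this plan: the only non-elementary ingredient is Theorem~\ref{thm:DLev}, which does all the structural work of placing $S$ inside a dilated interval, and the threshold $0.318$ in that theorem is precisely what dictates the allowable range of $c$. The only point needing a little care is the double bookkeeping---tracking both the integer and modular interpretations of intervals $[a,b]$ when verifying that the core is disjoint from $\tilde S + \tilde S$ in $\Z_p$.
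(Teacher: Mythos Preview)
Your proposal is correct and follows essentially the same approach as the paper: apply Theorem~\ref{thm:DLev} to place $S$ (after a dilation) inside $[s,p-s]$, use the sumset computation for this interval together with completeness to force $[p-2s+1,2s-1]\subseteq S$, then read off $T$ from the remaining $2t$ elements and invoke Theorem~\ref{thm:complete} to conclude that $T$ is $t$-special. The only minor addition you make explicit is the check that $p\le 7s/2-1$ holds for the invocation of Theorem~\ref{thm:complete}, which the paper leaves implicit.
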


\begin{proof}
Let $S \subseteq \Z_p$ be a symmetric complete sum-free set of size $|S|=s$.
Define $c=0.023$ (the constant satisfying $\frac{1}{3} \cdot (1-2c)=0.318$).
By the definition of $t$, the assumption $t \leq c \cdot p$ implies that
\[s = \frac{p-2t+1}{3} \geq \frac{p(1-2c)+1}{3} \geq 0.318p.\]
Theorem~\ref{thm:DLev} implies that the sum-free set $S$ is contained in a dilation of the interval $C=[s,p-s]$. As the statement of our lemma is invariant under dilations, we can assume that $S \subseteq C$.
Observe that \[C+C = [2s,2p-2s] = [0,p-2s] \cup [2s,p-1].\]
By $S \subseteq C$ we have $S+S \subseteq C+C$, so the completeness of $S$ implies that $\Z_p \setminus (C+C) \subseteq S$, that is, \[[p-2s+1,2s-1] \subseteq S.\]
Thus we have identified $4s-p-1$ elements of $S$. The remaining $s-(4s-p-1) = 2t$ elements of $S$ belong to
\[[s,p-2s] \cup [2s,p-s],\]
so by symmetry they are fully defined by the $t$ elements of $S$ in $[s,p-2s]$. Denote the elements of $S \cap [s,p-2s]$ by $s+T$ where $T \subseteq [0,2t-1]$. This yields that $S$ is equal to a set $S_T$ as given in Definition~\ref{def:S_T} where $|T|=t$. Applying Theorem~\ref{thm:complete} we obtain that $T$ is $t$-special, as desired.
\end{proof}

\subsubsection{Symmetric Complete Sum-free Sets of Almost Maximum Size}

For a prime $p$, Theorem~\ref{thm:Final_Char} essentially reduces the task of computing the symmetric complete sum-free subsets of $\Z_p$ of a given size to that of computing the corresponding $t$-special sets. We demonstrate this reduction below and derive an explicit characterization of the symmetric complete sum-free subsets of $\Z_p$ of `almost maximum size', that is, size smaller by $2$ than the maximum size of such a subset.

For a prime $p=3k+1$ set $s=k-2$, and notice that $t = (p-3s+1)/2 = 4$.
It is easy to verify that the $4$-special sets $T \subseteq [0,7]$ are
\[\{0,4,5,6\},~~\{0,2,4,6\},~~\{0,3,5,6\},~~\{1,2,6,7\}.\]
For every such $T$ this gives us the following symmetric complete sum-free subset of $\Z_p$:
\[((k-2)+T) \cup [k+6,2k-5] \cup ((2k+3)-T).\]
Assuming that $p$ is sufficiently large, the dilations of these sets are the only symmetric complete sum-free subsets of $\Z_p$ of size $k-2$.

Similarly, for a prime $p=3k+2$ set $s=k-1$, and notice that $t = (p-3s+1)/2 = 3$.
The $3$-special sets $T \subseteq [0,5]$ are
\[\{0,2,4\},~~\{0,3,4\}.\]
For every such $T$ this gives us the following symmetric complete sum-free subset of $\Z_p$:
\[((k-1)+T) \cup [k+5,2k-3] \cup ((2k+3)-T).\]
Again, assuming that $p$ is sufficiently large, the dilations of these sets are the only symmetric complete sum-free subsets of $\Z_p$ of size $k-1$.

\subsection{Counting the Large Symmetric Complete Sum-free Sets}\label{sec:counting_large}

We now consider the question of estimating the number of symmetric complete sum-free subsets of $\Z_p$ of a given size $s$.
For every $t \geq 1$ denote the number of $t$-special sets by $g(t)$.
We prove below Theorem~\ref{thm:counting_complete} that reduces counting the symmetric complete sum-free subsets of $\Z_p$ of size $s$ to computing $g(t)$ for an appropriately chosen value of $t$.

\begin{proof}[ of Theorem~\ref{thm:counting_complete}]
Let $p=3k+1$ be a sufficiently large prime. For an integer $r \geq 1$, denote $s=k-2r$ and $t = (p-3s+1)/2 = 3r+1$. Assuming that $r \leq c \cdot p$ for a sufficiently small $c$, Theorem~\ref{thm:Final_Char} implies that the symmetric complete sum-free subsets of $\Z_p$ of size $s$ are the dilations of the sets $S_T$ for the $t$-special sets $T$.

We first prove that for every $2 \leq d \leq p-2$ and every $t$-special sets $T$ and $T'$ (including the case $T=T'$) it holds that $S_T \neq d \cdot S_{T'}$.
By the symmetry of $S_T$ we may assume $2 \leq d \leq (p-1)/2$.
Consider the interval $D = [-(s-1),s-1]$, and notice that
\[|D| = 2s-1 = 2(k-2r)-1 \geq 2(k-2cp)-1 \geq \Big ( \frac{2}{3}-5c \Big)p \geq d,\]
where the last inequality holds for a sufficiently small $c$.
Consider the set
\[E = d \cdot [p-2s+1,2s-1] \subseteq d \cdot S_{T'},\]
whose elements form an arithmetic progression of difference $d$.
As $D$ is disjoint from $S_T$, it suffices to show that $D$ intersects $E$.
Assume by contradiction that $D$ does not include any element of $E$.
By $|D| \geq d$ it follows that $D$ does not include any element located between two consecutive elements of $E$ as well.
This implies that $[d(p-2s+1),d(2s-1)]$ is an interval disjoint from $D$.
As its size is $d(4s-p-2)+1$, it follows that the total number of elements in the group is at least
\begin{eqnarray*}
d(4s-p-2)+1 + |D| &\geq& 2(4s-p-2)+1 + (2s-1) = 10s-2p-4
\\ &=& 10(k-2r)-2(3k+1)-4 = 4k-20r-6 \geq \Big(\frac{4}{3}-21c \Big )p > p,
\end{eqnarray*}
where for the last two inequalities we again assume that $c$ is sufficiently small. This clearly gives a contradiction.

Now, for every $t$-special set $T$ we have to count the $(p-1)/2$ dilations of $S_T$, hence the total number of symmetric complete sum-free subsets of $\Z_p$ of size $s$ is $(p-1)/2$ times the number of $(3r+1)$-special sets, as required.

The proof of the second item is essentially identical. Here, for $p=3k+2$ and $s=k-2r+1$ we have $t = (p-3s+1)/2 = 3r$.

\end{proof}

\subsubsection{Counting Special Sets}

Motivated by Theorem~\ref{thm:counting_complete}, we would like to understand the behavior of the number $g(t)$ of $t$-special sets as a function of $t$.
While $g(t)$ is trivially bounded from above by ${{2t} \choose t} < 2^{2t}$, the following claim bounds $g(t)$ from below and shows that it grows exponentially in $t$.

\begin{claim}\label{claim:number_T}
For every integer $t \geq 1$, $g(t) \geq 2^{\lfloor t/3 \rfloor}$.
\end{claim}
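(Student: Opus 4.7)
The plan is to construct explicitly at least $2^{\lfloor t/3 \rfloor}$ distinct $t$-special sets. Since Claim~\ref{claim:0in} shows that any size-$t$ subset of $[0,2t-1]$ containing $0$ and satisfying the Schur-type avoidance condition $2t-1 \notin T+T+T$ is automatically $t$-special, it suffices to exhibit this many candidates containing $0$ and verify that single condition for each of them.

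For every subset $I \subseteq [\lceil 2t/3 \rceil, t-1]$ I would define
\[T_I \;=\; \{0\} \cup I \cup \{2t-1-\ell \mid \ell \in [1,t-1]\setminus I\}.\]
Each $T_I$ has exactly $t$ elements, contains $0$, and naturally splits into ``small'' elements $\{0\}\cup I \subseteq \{0\}\cup [\lceil 2t/3 \rceil, t-1]$ and ``large'' elements lying in $[t,2t-2]$. The correspondence $I \mapsto T_I$ is injective because $\ell \in I\setminus I'$ implies $\ell \in T_I$ but $\ell \notin T_{I'}$ (the only other way $\ell$ could enter $T_{I'}$ is as $2t-1-\ell' \geq t$, which is ruled out by $\ell \leq t-1$), so this yields $2^{|[\lceil 2t/3 \rceil, t-1]|} = 2^{\lfloor t/3 \rfloor}$ distinct sets.

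To verify $2t-1 \notin T_I+T_I+T_I$, I would do a short case analysis on a hypothetical triple $a+b+c=2t-1$ with $a,b,c\in T_I$. Because any two large elements already sum to more than $2t-1$, at most one of $a,b,c$ is large. If none is large, then all nonzero summands are $\geq \lceil 2t/3\rceil$, so three nonzero small terms would give a sum $\geq 2t$, while any smaller number of nonzero terms tops out below $2t-1$. If exactly one summand, say $c=2t-1-\ell$ with $\ell \in [1,t-1]\setminus I$, is large, then $a+b=\ell\leq t-1$, but two nonzero small elements sum to at least $2\lceil 2t/3\rceil > t-1$, so one of $a,b$ must be $0$ and the other must equal $\ell$, forcing $\ell \in I$ and contradicting $\ell \notin I$.

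The main obstacle is precisely calibrating the window: anything smaller than $\lceil 2t/3 \rceil$ would allow one of the two obstructions (a small-small-small triple summing to $2t-1$, or a small-small-large triple doing so) to survive, while anything larger would waste freely choosable indices. The $2t/3$ threshold is the unique choice that simultaneously blocks both obstructions and delivers the promised $\lfloor t/3 \rfloor$ degrees of freedom, from which the bound $g(t)\geq 2^{\lfloor t/3 \rfloor}$ follows immediately.
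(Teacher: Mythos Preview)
Your proposal is correct and follows essentially the same approach as the paper: you construct the same family $T_I$ indexed by $I\subseteq[\lceil 2t/3\rceil,t-1]$, invoke Claim~\ref{claim:0in}, and verify $2t-1\notin T_I+T_I+T_I$ by a short case analysis. The only cosmetic difference is that the paper organizes the case split by the number of zero summands, whereas you split by the number of ``large'' summands; both arguments are equivalent and equally brief.
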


\begin{proof}
For every set $I \subseteq [\lceil 2t/3 \rceil, t-1]$ consider the set $T_I$ defined as
\[T_I = \{0\} \cup I \cup \{ 2t-1-i \mid \lceil 2t/3 \rceil \leq i \leq t-1,~ i \notin I\} \cup [2t-\lceil 2t/3 \rceil,2t-2]. \]
Equivalently, the set $T_I$ consists of $\{0\} \cup I$ and every element $2t-1-i$ for $i \in [0,t-1]$ with $i \notin \{0\} \cup I$.

As there are $2^{\lfloor t/3 \rfloor}$ possibilities for the set $I$, each of which gives a different $T_I$, it suffices to verify that those sets $T_I$ are $t$-special.
The set $T_I$ includes $0$ and has size $t$. By Claim~\ref{claim:0in}, it suffices to prove that no three elements of $T_I$ have sum $2t-1$. If the three elements are nonzero then their sum is at least $2t$. If exactly one of them is zero, then the sum of the other two cannot equal to $2t-1$ because $T_I$ includes exactly one of $i$ and $2t-1-i$ for every $i \in [0,t-1]$.
Finally, if at least two of the elements are zeros then their sum with the third is at most $2t-2$, and we are done.
\end{proof}

It is easy to derive now that the number of symmetric complete sum-free subsets of $\Z_n$ is exponential in $n$, confirming Theorem~\ref{thm:CountIntro}.

\begin{proof}[ of Theorem~\ref{thm:CountIntro}]
For a sufficiently large $n$, let $s$ be the smallest integer for which $n \leq 7s/2-1$ and $t = (n-3s+1)/2$ is a positive integer. Notice that for this choice we have $\lfloor t/3 \rfloor \geq c \cdot n$ for some universal constant $c$ (any $c<1/42$ would suffice).
By Theorem~\ref{thm:complete}, for every $t$-special set $T \subseteq [0,2t-1]$ the set $S_T$ is a symmetric complete sum-free subset of $\Z_n$ . By Claim~\ref{claim:number_T} there are at least $2^{\lfloor t/3 \rfloor} \geq 2^{cn}$ $t$-special sets $T$, and since each of them defines a different $S_T$, we are done.
\end{proof}

\section{Symmetric Complete Sum-free Sets of Various Sizes}\label{sec:small}

In this section we present a construction of symmetric complete sum-free subsets of the cyclic group $\Z_n$ that yields such sets with various sizes, confirming Theorems~\ref{thm:DenseIntro} and~\ref{thm:SmallIntro}.

Let $t \geq 1$, $d \geq 2$ and $k \geq 4$ be integers, and let $n$ be one of the following two integers
\begin{eqnarray}\label{eq:n_values}
4dk+6t-11,~~ 4dk+6t-14.
\end{eqnarray}
Note that every sufficiently large integer $n$ can be expressed as one of these two numbers for certain choices of the parameters.
Consider the following three subsets of $\Z_n$:
\begin{itemize}
  \item $A = [\frac{1}{2}(\lceil \frac{n}{2} \rceil+t+1), \frac{1}{2}(\lceil \frac{n}{2} \rceil+t+1) + d-2]$,
  \item $B = \Big\{ \frac{1}{2} (\lceil \frac{n}{2} \rceil+t+1 ) + 2d-2 + i \cdot d~ \Big|~ 0 \leq i \leq k-4  \Big\}$, and
  \item $C = [ \lfloor \frac{n}{2} \rfloor -t, \lceil \frac{n}{2} \rceil+t]$.
\end{itemize}
Define the set
\[ S^{(n)}_{t,d,k} = ( \pm A) \cup ( \pm B) \cup C.\]

\begin{figure}[h]
\begin{center}
\includegraphics[width=6in]{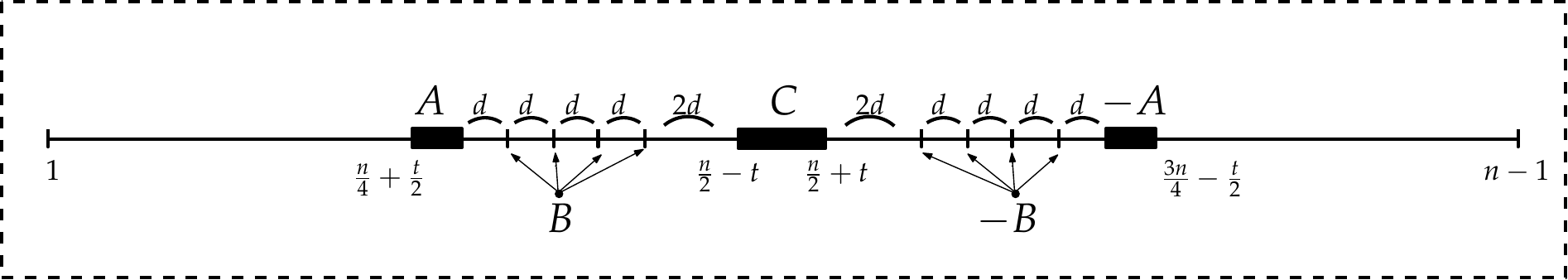}
\end{center}
\caption{An illustration of the elements of the set $S^{(n)}_{t,d,k}$. Some additive constants are omitted.}
\label{fig:the_set}
\end{figure}

Note that $A$ is an interval of length $d-1$ and that $B$ is an arithmetic progression of difference $d$ that includes $k-3$ elements, all located on the right of the elements of $A$ (see Figure~\ref{fig:the_set}). The set $C$ is a symmetric interval in $\Z_n$, whose size $|C|$ is $2t+1$ if $n$ is even and $2t+2$ if $n$ is odd. It follows from the definition that the set $S^{(n)}_{t,d,k}$ is also symmetric in $\Z_n$.

A simple calculation shows, for the two possible values of $n$ given in~\eqref{eq:n_values}, that the last element of $B$, denoted $b_l$, satisfies
\begin{eqnarray}\label{eq:b_l}
b_l = \Big \lfloor \frac{n}{2} \Big \rfloor - t -2d+2.
\end{eqnarray}
Indeed, for $n = 4dk+6t-11$ we have $ \lceil n/2 \rceil = 2dk+3t-5$ and $\lfloor n/2 \rfloor = 2dk+3t-6$, hence
\begin{eqnarray*}
b_l &=& \frac{1}{2} \Big ( \Big \lceil \frac{n}{2} \Big \rceil+t+1 \Big ) + 2d-2 + (k-4) \cdot d
\\ &=& \frac{1}{2}(2dk+4t-4)+2d-2+(k-4) \cdot d \\ &=& 2dk+2t-2d-4 = \Big \lfloor \frac{n}{2} \Big \rfloor -t-2d+2.
\end{eqnarray*}
For $n = 4dk+6t-14$ the equality is obtained similarly.
It follows that $b_l < \lfloor n/2 \rfloor - t$, hence the elements of $B$ are all located on the left of those of $C$ (see Figure~\ref{fig:the_set}). In particular, we obtain that the sets $\pm A,\pm B,C$, considered modulo $n$, are pairwise disjoint, implying that
\begin{eqnarray}\label{eq:size_S}
| S^{(n)}_{t,d,k} | = 2|A|+2|B|+|C| = 2(d+k-4)+|C|.
\end{eqnarray}
Observe that the sets $-A$ and $-B$ can be explicitly written as
\begin{itemize}
  \item $-A = [\frac{1}{2}(\lfloor \frac{3n}{2} \rfloor-t-1)-d+2, \frac{1}{2}(\lfloor \frac{3n}{2} \rfloor-t-1)]$ and
  \item $-B = \Big\{ \lceil \frac{n}{2} \rceil+t+2d-2 + i \cdot d~ \Big|~ 0 \leq i \leq k-4  \Big\}$,
\end{itemize}
where for $-B$ we have used~\eqref{eq:b_l}.

The main result of this section is the following.
\begin{theorem}\label{thm:complete_small}
For all integers $t \geq 1$, $d \geq 2$, $k \geq 4$ and $n \in \{4dk+6t-11,4dk+6t-14\}$ such that $|C| \geq d$ (that is, $d \leq 2t+1$ if $n$ is even and $d \leq 2t+2$ if $n$ is odd), the set $S^{(n)}_{t,d,k}$ is a symmetric complete sum-free subset of $\Z_n$.
\end{theorem}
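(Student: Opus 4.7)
The plan is to invoke Claim~\ref{claim:sumfree_short} with $G_1 = [0,\lfloor n/2 \rfloor]$, so that $G_1 \cap S^{(n)}_{t,d,k} = A \cup B \cup C_1$ where $C_1 = C \cap G_1 = [\lfloor n/2 \rfloor - t,\, \lfloor n/2 \rfloor]$. Symmetry of $S^{(n)}_{t,d,k}$ is immediate from the construction, so both sum-freeness and completeness reduce to checks on sums of elements of $A \cup B \cup C_1$.

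For sum-freeness I would handle the six pairs $(A,A), (A,B), (A,C_1), (B,B), (B,C_1), (C_1,C_1)$ in turn. The key cases are that $A+A$ lies in the short interval $[\lceil n/2 \rceil + t + 1,\, \lceil n/2 \rceil + t + 2d - 3]$, which sits in the gap between $C$ and $-B$; and that the $k-3$ length-$(d-1)$ sub-intervals of $A+B$ fit exactly into the $k-4$ gaps between consecutive elements of $-B$ together with the single gap between $-B$ and $-A$. For $B+B$, note it is an arithmetic progression of step $d$ whose starting point $2\min(B)$ satisfies $2\min(B) - (n - \min(B)) \equiv 1 \pmod d$ (a short calculation valid for both choices of $n$), so $B+B$ and $-B$ sit in different residue classes mod $d$ and cannot intersect; direct range bounds then rule out collisions with $C$ and with $-A$. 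The sums $A+C_1, B+C_1, C_1+C_1$ all land past $\max(-A)$ and before wrap (with $C_1+C_1$ additionally reaching $0$ when $n$ is even), a region disjoint from $S^{(n)}_{t,d,k}$.

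For completeness I would decompose $G_1 \setminus S^{(n)}_{t,d,k}$ into four regions: (i) the initial stretch $[0, \min(A)-1]$; (ii) the single gap $[\max(A)+1, \min(B)-1]$ of size $d-1$; (iii) the $k-4$ internal gaps of the arithmetic progression $B$, each of size $d-1$; (iv) the gap $[\max(B)+1, \min(C_1)-1]$ of size $2d-3$. Region (iv) is exactly $-(A+A)$, the reflection of the right-hand gap between $C$ and $-B$ identified above. Regions (ii) and (iii) are covered jointly by $-(A+B)$, whose $k-3$ length-$(d-1)$ sub-intervals drop precisely into the $A$--$B$ gap together with the $k-4$ internal gaps of $B$; this is the mirror image of the key property that $A+B$ fills the gaps of $-B$.

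The main obstacle, and the only place where the hypothesis $|C| \geq d$ is essential, is covering region~(i). I would tile $[0, \min(A)-1]$ using five sumsets: $A-A$ contributes $[0,d-2]$; $C+C$ is a symmetric interval around $0$ (of radius $|C|-1$ or $|C|$ depending on the parity of $n$) that covers the otherwise-missing point $d-1$ precisely because $d-1 \leq |C|-1$; $B-A$ equals the union $\bigcup_{i=0}^{k-4}[(i+1)d,\,2d-2+id]$, providing the bulk of $[d,(k-2)d-2]$ with holes at $2d-1, 3d-1, \ldots$; those holes, together with everything up to $\min(A)-2d+1$, are filled by $-B+C$, which is a single contiguous interval $[2d-2,\,\min(A)-2d+1]$ precisely when $d \leq |C|$; finally $-A+C$ is the contiguous interval of length $|C|+d-2$ ending at $\min(A)-1$, and the inequality $d \leq |C|$ ensures that its left endpoint meets the right end of $-B+C$ with no gap. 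The arithmetic for the two choices $n=4dk+6t-11$ and $n=4dk+6t-14$ is essentially identical, differing only through the parity of $|C|$, and both can be carried out uniformly with small, tracked adjustments.
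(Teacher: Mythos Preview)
Your proposal is correct and is essentially the paper's proof reflected through $x\mapsto -x$: the paper applies Claim~\ref{claim:sumfree_short} on the ``upper half'' $[\lfloor n/2\rfloor,n-1]$ using the sumsets $A+A$, $A+B$, $B+B$, $A+C$, $B+C$, $C+C$, while you work on the lower half $[0,\lfloor n/2\rfloor]$ with the negations $-(A+A)$, $-(A+B)$, $C-A$, $C-B$, etc., so every step matches one of the paper's after negation. One small slip: the residue you compute for $B+B$ versus $-B$ is $-1\pmod d$, not $+1\pmod d$ (the paper gets $2d-1$ as the offset); this does not affect the conclusion that the two progressions lie in different residue classes.
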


In order to prove the theorem one has to show that every element of $\Z_n$ belongs to exactly one of $S^{(n)}_{t,d,k}$ and $S^{(n)}_{t,d,k} + S^{(n)}_{t,d,k}$. We start with the following claim that shows that the sum $A+B$ precisely fills the gaps between the elements of the arithmetic progression $-B$.

\begin{claim}\label{claim:-B,A+B}
The sets $-B$ and $A+B$ are disjoint and their union is
\[ \Big [  \Big \lceil \frac{n}{2} \Big \rceil + t+2d-2, \frac{1}{2} \Big ( \Big \lfloor \frac{3n}{2} \Big \rfloor -t-1 \Big)-d+1 \Big ]. \]
\end{claim}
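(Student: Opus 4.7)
The plan is to write both $-B$ and $A+B$ as explicit subsets of $\{0,1,\ldots,n-1\}$ and observe that $A+B$ fills precisely the gaps between consecutive terms of the arithmetic progression $-B$. Set $\beta = \lceil n/2 \rceil + t + 2d - 2$; the formula for $-B$ recorded just before the claim already gives $-B = \{\beta + id : 0 \le i \le k-4\}$, an arithmetic progression with common difference $d$ and $k-3$ terms. For $A+B$, let $\alpha = \tfrac{1}{2}(\lceil n/2 \rceil + t + 1)$, which is an integer for both admissible values of $n$ (a short parity check). Then $A = [\alpha,\alpha+d-2]$ and $B = \{\alpha + 2d - 2 + jd : 0 \le j \le k-4\}$, so
\[
A+B \;=\; \bigcup_{j=0}^{k-4}\bigl[\,2\alpha + 2d - 2 + jd,\; 2\alpha + 3d - 4 + jd\,\bigr].
\]
Using $2\alpha = \lceil n/2 \rceil + t + 1$, the endpoints simplify to $\beta + 1 + jd$ and $\beta + (j+1)d - 1$, giving
\[
A+B \;=\; \bigcup_{j=0}^{k-4}\bigl[\,\beta + jd + 1,\; \beta + (j+1)d - 1\,\bigr].
\]

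The $j$-th term of $A+B$ consists of exactly the $d-1$ integers strictly between the consecutive terms $\beta + jd$ and $\beta + (j+1)d$ of $-B$. Hence $-B$ and $A+B$ are disjoint, and their union is the complete integer interval
\[
\bigl[\beta,\; \beta + (k-3)d - 1\bigr].
\]
The left endpoint is already $\lceil n/2 \rceil + t + 2d - 2$, matching the claim. It remains to verify that $\beta + (k-3)d - 1$ equals $\tfrac{1}{2}(\lfloor 3n/2 \rfloor - t - 1) - d + 1$, which is a direct substitution that splits according to the two allowed values of $n$: for $n = 4dk + 6t - 11$ one uses $\lceil n/2 \rceil = 2dk + 3t - 5$ and $\lfloor 3n/2 \rfloor = 6dk + 9t - 17$, while for $n = 4dk + 6t - 14$ one uses $\lceil n/2 \rceil = 2dk + 3t - 7$ and $\lfloor 3n/2 \rfloor = 6dk + 9t - 21$; in both cases both sides collapse to the same expression.

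Finally, to justify that the computation performed with integer representatives correctly describes subsets of $\Z_n$, one checks $\lceil n/2 \rceil \le \beta$ and $\beta + (k-3)d - 1 < n$, so the entire interval sits inside $[0,n-1]$ and no wrap-around occurs. The only real obstacle is bookkeeping: handling $\lceil \cdot \rceil$, $\lfloor \cdot \rfloor$, and the two admissible values of $n$ in parallel. There is no conceptual difficulty, the identity is essentially the trivial partition of a run of consecutive integers into an arithmetic progression of step $d$ and the $d-1$ integers filling each of its gaps.
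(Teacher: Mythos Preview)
Your proof is correct and follows essentially the same approach as the paper: both arguments write $-B$ as an arithmetic progression with step $d$ and observe that $A+B$ consists of the $(d-1)$-element blocks filling its gaps (plus one trailing block). The only cosmetic difference is in verifying the right endpoint: the paper avoids the case split on $n$ by invoking~\eqref{eq:b_l} together with the identity $\tfrac{1}{2}\lceil n/2\rceil + \lfloor n/2\rfloor = \tfrac{1}{2}\lfloor 3n/2\rfloor$, whereas you substitute the two admissible values of $n$ directly.
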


\begin{proof}
The set $-B$ is an arithmetic progression of difference $d$ and length $k-3$ starting at $\lceil \frac{n}{2} \rceil + t+2d-2$.
The set $A$ is a $d-1$ length interval, so adding it to the arithmetic progression $B$ results in a pairwise disjoint union of $k-3$ shifts of $A$. The first element of $A+B$ is
\[  \frac{1}{2} \Big ( \Big \lceil \frac{n}{2} \Big \rceil +t+1 \Big )  + \Big( \frac{1}{2} \Big ( \Big \lceil \frac{n}{2} \Big \rceil +t+1 \Big )+2d-2 \Big )  = \Big \lceil \frac{n}{2} \Big \rceil +t +2d-1,\]
i.e., the consecutive to the first element of $-B$. Hence, $A+B$ precisely consists of the intervals of length $d-1$ that fill the gaps between the elements of $-B$ and one more interval that follows the last element of $-B$. Hence, the sets $-B$ and $A+B$ are disjoint, and their union is an interval starting at the first element of $-B$. The last element of this interval is the last element of $A+B$ which equals
\[ \Big( \frac{1}{2} \Big ( \Big \lceil \frac{n}{2} \Big \rceil +t+1 \Big )+d-2 \Big ) + \Big ( \Big \lfloor \frac{n}{2} \Big \rfloor - t -2d+2 \Big)  = \frac{1}{2} \Big ( \Big \lfloor \frac{3n}{2} \Big \rfloor -t-1 \Big ) -d +1,\]
where we have used~\eqref{eq:b_l} and the fact that $\frac{1}{2} \cdot \lceil \frac{n}{2} \rceil + \lfloor \frac{n}{2} \rfloor = \frac{1}{2} \cdot \lfloor \frac{3n}{2} \rfloor$.
\end{proof}

In the following claim we consider the set $B+C$.

\begin{claim}\label{claim:B+C}
If $|C| \geq d$ then the set $B+C$ is
\[ \Big [ \frac{1}{2} \Big ( \Big \lfloor \frac{3n}{2} \Big \rfloor-t+1 \Big ) + 2d-2 ,   n-2d+2 \Big ] .\]
\end{claim}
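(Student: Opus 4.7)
The plan is a direct computation, exploiting the fact that $B$ is an arithmetic progression of common difference $d$ consisting of $k-3$ consecutive terms, and $C$ is an interval. Writing $B = \{b_0 + id \mid 0 \leq i \leq k-4\}$ with $b_0 = \frac{1}{2}(\lceil n/2 \rceil+t+1) + 2d - 2$, the sum $B+C$ is the union of the $k-3$ translates $(b_0 + id) + C$, each of which is an interval of length $|C|$. The hypothesis $|C| \geq d$ guarantees that consecutive translates overlap (or abut), since the gap between the right endpoint of $(b_0+id)+C$ and the left endpoint of $(b_0+(i+1)d)+C$ is $d - |C| \leq 0$. Hence $B + C$ is a single interval, and it suffices to identify its endpoints.

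For the left endpoint, I would add the minimum $b_0$ of $B$ to the minimum $\lfloor n/2 \rfloor - t$ of $C$, obtaining
\[\frac{1}{2}\Big(\Big\lceil \frac{n}{2} \Big\rceil + t +1\Big) + 2d - 2 + \Big\lfloor \frac{n}{2} \Big\rfloor - t = \frac{1}{2}\Big(\Big\lceil \frac{n}{2} \Big\rceil + 2\Big\lfloor \frac{n}{2} \Big\rfloor - t + 1\Big) + 2d - 2,\]
which matches $\frac{1}{2}(\lfloor 3n/2 \rfloor - t + 1) + 2d - 2$ by the identity $\lceil n/2 \rceil + 2 \lfloor n/2 \rfloor = \lfloor 3n/2 \rfloor$ (check both parities of $n$ separately). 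For the right endpoint, I would add the largest element $b_l$ of $B$ to the maximum $\lceil n/2 \rceil + t$ of $C$, using the already established identity \eqref{eq:b_l} to get
\[b_l + \Big\lceil \frac{n}{2} \Big\rceil + t = \Big(\Big\lfloor \frac{n}{2} \Big\rfloor - t - 2d + 2\Big) + \Big\lceil \frac{n}{2} \Big\rceil + t = n - 2d + 2.\]

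There is no substantive obstacle here: the content is entirely the overlap condition $|C| \geq d$ that collapses the union of translates into one interval, and the arithmetic simplifications of the two endpoints. The computation at the right endpoint is already essentially done in the paper via \eqref{eq:b_l}, and the computation at the left endpoint reduces to the parity identity for $\lceil n/2 \rceil + 2\lfloor n/2 \rfloor$.
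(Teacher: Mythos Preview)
Your proposal is correct and follows essentially the same approach as the paper: both argue that $B+C$ is a union of $k-3$ translates of the interval $C$ by the arithmetic progression $B$, use $|C|\ge d$ to conclude these translates overlap into a single interval, and then compute the two endpoints via the identity $\lceil n/2\rceil+2\lfloor n/2\rfloor=\lfloor 3n/2\rfloor$ and equation~\eqref{eq:b_l}, respectively.
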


\begin{proof}
As $|B| = k-3$ and $C$ is an interval, $B+C$ is a union of $k-3$ intervals, each of which is a shift of $C$ by an element of $B$.
Since $B$ is an arithmetic progression of difference $d$, each of these intervals is a shift by $d$ of the previous interval, and since $|C| \geq d$, these shifts overlap, so $B+C$ is itself an interval. Its first element is
\[ \Big ( \frac{1}{2} \Big ( \Big \lceil \frac{n}{2} \Big \rceil+t+1 \Big ) + 2d-2 \Big )+ \Big( \Big \lfloor \frac{n}{2} \Big \rfloor -t \Big) =  \frac{1}{2} \Big ( \Big \lfloor \frac{3n}{2} \Big \rfloor-t+1 \Big ) + 2d-2,\]
and the last one, using~\eqref{eq:b_l}, is
\[ \Big ( \Big \lfloor \frac{n}{2} \Big \rfloor - t -2d+2 \Big) + \Big( \Big \lceil \frac{n}{2} \Big \rceil +t \Big ) =  n-2d+2,\]
so we are done.
\end{proof}

We are now ready to prove Theorem~\ref{thm:complete_small}.
\paragraph{Proof of Theorem~\ref{thm:complete_small}.}
Denote $S = S^{(n)}_{t,d,k} = (\pm A) \cup (\pm B) \cup C$. Assume that $|C| \geq d$.
The proof is given in the following two lemmas.
\begin{lemma}
The set $S \subseteq \Z_n$ is complete.
\end{lemma}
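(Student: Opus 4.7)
My plan is to apply Claim~\ref{claim:sumfree_short}(2) with $G_1=[0,\lfloor n/2\rfloor]$, which reduces the task to showing $G_1\setminus S\subseteq S+S$. In $G_1$ the elements of $S$ are $A$, the arithmetic progression $B$, and the ``right half'' $[\lfloor n/2\rfloor-t,\lfloor n/2\rfloor]$ of $C$, so $G_1\setminus S$ decomposes into four pieces: the prefix $P_0=[0,\alpha-1]$ (where $\alpha=\min A$), the short gap $P_1$ between $A$ and $\min B$, the $k-4$ singleton gaps $P_2$ inside $B$, and the segment $P_3=[b_l+1,\lfloor n/2\rfloor-t-1]$ between $\max B$ and $C$. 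I will exhibit an explicit subset of $S+S$ covering each.

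The gaps $P_1$ and $P_2$ are handled by Claim~\ref{claim:-B,A+B}: negating the identity proved there and using $-(-B)=B$ and $-(A+B)=(-A)+(-B)$, one obtains that $B\cup((-A)+(-B))$ is exactly the interval $[\alpha+d-1,b_l]$, so $(-A)+(-B)\subseteq S+S$ covers $P_1\cup P_2$. The gap $P_3$ is covered by a small numerical coincidence baked into the definitions: a direct reduction modulo $n$ shows that $(-A)+(-A)=-(A+A)$ equals precisely the interval $[b_l+1,\lfloor n/2\rfloor-t-1]=P_3$, in both parities of $n$.

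To cover the prefix $P_0=[0,\alpha-1]$ I would chain together four subsets of $S+S$: (i) $C+C$ wraps around zero and contributes the initial interval $[0,|C|-1]$; (ii) $B+(-A)$ contains the leftmost block $[d,2d-2]$ of the interval decomposition of $B-A$; (iii) $(-B)+C$ is an interval, since $|C|\geq d$ guarantees that consecutive shifts of $C$ by elements of $-B$ overlap, and its range computes to $[2d-2,\alpha-2d+1]$; and (iv) $(-A)+C$ is an interval whose range ends at $\alpha-1$. Checking endpoints shows that these four intervals glue together to cover $[0,\alpha-1]$; the hypothesis $|C|\geq d$ is used both to make (iii) connected and to guarantee that $(-B)+C$ meets $(-A)+C$.

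The main obstacle is the endpoint bookkeeping in this last step. When $d$ is large, specifically $d>t+1$, the interval $C+C$ ends at $2t$ while $(-B)+C$ only starts at $2d-2$, leaving a short gap $[2t+1,2d-3]$; the ``bridge'' $[d,2d-2]\subseteq B+(-A)$ closes it exactly when $d\leq 2t+1$ for even $n$ (resp.\ $d\leq 2t+2$ for odd $n$), which is precisely the hypothesis $|C|\geq d$. The two parities of $n$, corresponding to the two admissible values in~\eqref{eq:n_values}, lead to parallel but separate arithmetic that must be verified side by side.
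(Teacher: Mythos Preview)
Your proposal is correct and is essentially the mirror image of the paper's proof: the paper applies Claim~\ref{claim:sumfree_short} with $G_1=[\lfloor n/2\rfloor,n-1]$ and covers that half using $C$, $A+A$, $(-B)\cup(A+B)$ (via Claim~\ref{claim:-B,A+B}), $-A$, $A+C$, $B+C$ (via Claim~\ref{claim:B+C}), $A+(-B)$, and $C+C$ in order, whereas you take $G_1=[0,\lfloor n/2\rfloor]$ and use the negations of exactly these sets. The decomposition, the use of the two auxiliary claims, and the endpoint verifications (including the role of $|C|\ge d$) all match one-to-one under $x\mapsto -x$.
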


\begin{proof}
We shall prove that $\Z_n \subseteq S \cup (S+S)$. By Claim~\ref{claim:sumfree_short}, using the symmetry of $S$ and the fact that $0 \notin S$, it suffices to prove that every element of $[ \lfloor \frac{n}{2} \rfloor ,n-1]$ is in either $S$ or $S+S$. This is shown via the following steps:
\begin{itemize}
  \item $[\lfloor \frac{n}{2} \rfloor, \lceil \frac{n}{2} \rceil + t]$: This set is contained in $C \subseteq S$.
  \item $[\lceil \frac{n}{2} \rceil +t+1, \lceil \frac{n}{2} \rceil + t+2d-3]$: Observe that this set is equal to $A+A \subseteq S + S$.
  \item $[ \lceil \frac{n}{2} \rceil + t+2d-2, \frac{1}{2}( \lfloor \frac{3n}{2} \rfloor -t-1)-d+1]$: By Claim~\ref{claim:-B,A+B}, this set is the union of $-B$ and $A+B$, and is thus contained in $S \cup (S+S)$.
  \item $[ \frac{1}{2}( \lfloor \frac{3n}{2} \rfloor -t-1)-d+2, \frac{1}{2}(\lfloor \frac{3n}{2} \rfloor-t-1)]$: This set is equal to $-A \subseteq S$.
  \item $[ \frac{1}{2}(\lfloor \frac{3n}{2} \rfloor-t+1), \frac{1}{2}(\lfloor \frac{3n}{2} \rfloor-t+1) + d+|C|-3]$: Observe that this set is equal to $A+C \subseteq S +S$.
  \item $[ \frac{1}{2}(\lfloor \frac{3n}{2} \rfloor-t+1) + d+|C|-2, n-2d+2 ]$: By Claim~\ref{claim:B+C}, using $|C| \geq d$, this interval is contained in $B+C \subseteq S+S$.
  \item $[n-2d+3,n-d]:$ For $d=2$ this set is empty. For $d \geq 3$ it is contained in the interval $[n-2d+2,n-d]$, obtained by adding to $A$ the negative of the first element of $B$, and is thus contained in $S+S$.
  \item $[n-d+1,n-1]:$ Observe that by $|C| \geq d$ we have
  \[ 2 \Big \lfloor \frac{n}{2} \Big \rfloor -2t \leq n-d+1,\]
  hence this set is contained in $[2 \lfloor \frac{n}{2} \rfloor -2t,n-1] \subseteq C+C \subseteq S+S$.
\end{itemize}
This completes the proof of the lemma.
\end{proof}

\begin{lemma}
The set $S \subseteq \Z_n$ is sum-free.
\end{lemma}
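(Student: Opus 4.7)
The plan is to apply Claim~\ref{claim:sumfree_short}(1) with $G_1=[0,\lfloor n/2\rfloor]$, for which $G_1\cup(-G_1)=\Z_n$ and $G_1\cap S=A\cup B\cup C_1$, where $C_1:=C\cap G_1=[\lfloor n/2\rfloor-t,\lfloor n/2\rfloor]$. By the symmetry of $S$, sum-freeness reduces to checking that each of the six sumsets $A+A$, $A+B$, $A+C_1$, $B+B$, $B+C_1$ and $C_1+C_1$ is disjoint from $S$.

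Five of the six cases require essentially no new computation beyond what is already done for completeness. The set $A+A=[\lceil n/2\rceil+t+1,\lceil n/2\rceil+t+2d-3]$ sits strictly between the right endpoint of $C$ and the first element of $-B$. Claim~\ref{claim:-B,A+B} shows that $A+B$ fills the gaps among the elements of $-B$ inside an interval whose upper endpoint is one less than the start of $-A$, so $A+B$ is disjoint from $-B$ and lies strictly above $C$ and strictly below $-A$. Claim~\ref{claim:B+C} places $B+C\supseteq B+C_1$ in an interval that begins strictly above the last element of $-A$ and ends at $n-2d+2<n$; an analogous direct check shows that $A+C\supseteq A+C_1$ starts one unit past the top of $-A$ and lies in the same empty strip. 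Finally, $C_1+C_1\subseteq[2\lfloor n/2\rfloor-2t,2\lfloor n/2\rfloor]$, which in $\Z_n$ is a narrow strip just below $n$ together with at most $\{0\}$; using the bound $n\geq 4dk+6t-14\geq 6t+18$ implied by $d\geq 2$, $k\geq 4$, this strip is separated from the smallest element of $S$, namely $\min A\approx n/4$, and hence meets no element of $S$.

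The remaining case $B+B$ is the principal obstacle, since it is not addressed at all by the completeness proof. As $B$ is an arithmetic progression with common difference $d$ and $k-3$ terms, $B+B$ is also an arithmetic progression with common difference $d$ (and $2k-7$ terms) starting at $2\beta=\lceil n/2\rceil+t+4d-3$ and ending at $2\lfloor n/2\rfloor-2t-4d+4$; for $d\geq 2$, $t\geq 1$ this end is strictly less than $n$, so $B+B$ does not wrap around $\Z_n$. Consequently the only pieces of $S$ that $B+B$ could potentially meet are $-B$ and $-A$. For $-B$, both $B+B$ and $-B$ are APs with common difference $d$ and their smallest elements differ by $(4d-3)-(2d-2)=2d-1$, which is not a multiple of $d$ once $d\geq 2$, so they are disjoint. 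For $-A$, an interval of length $d-1$ starting at $\alpha:=\tfrac12(\lfloor 3n/2\rfloor-t-1)-d+2$, a short computation valid for both admissible values of $n$ gives $\alpha-2\beta=d(k-5)+1$, so an element of $B+B$ could land in $-A$ only if some multiple of $d$ lay in $[d(k-5)+1,d(k-5)+d-1]$; but this window lies strictly between the consecutive multiples $(k-5)d$ and $(k-4)d$, and therefore contains none.

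Combining the six disjointness verifications, Claim~\ref{claim:sumfree_short}(1) then yields that $S$ is sum-free.
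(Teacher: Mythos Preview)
Your proof is correct and follows essentially the same route as the paper: reduce via Claim~\ref{claim:sumfree_short} to checking that the six pairwise sumsets avoid $S$, handle $A+A$, $A+B$, $A+C$, $B+C$, $C+C$ by locating them in the appropriate gaps (leaning on Claims~\ref{claim:-B,A+B} and~\ref{claim:B+C}), and treat $B+B$ as the only genuinely new case via a residue-mod-$d$ argument against $-B$ and $-A$. The one cosmetic difference is that you fix $G_1=[0,\lfloor n/2\rfloor]$ explicitly and therefore work with the half-interval $C_1=C\cap G_1$ instead of the full symmetric $C$; the paper instead invokes the symmetry of $C$ directly so that it may use $A+C$, $B+C$, $C+C$ without truncation. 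Your computation $\alpha-2\beta=d(k-5)+1$ for the $B+B$ versus $-A$ check is equivalent to the paper's observation that $-a_0+1\in B+B$ lies just past the interval $-A$ of length $d-1$.
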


\begin{proof}
By Claim~\ref{claim:sumfree_short}, using the symmetry of $S$ and $C$, it suffices to verify that each of the sets
\[ A+A,~A+B,~B+B,~B+C,~A+C, \mbox{~and~}~C+C \]
has an empty intersection with $S$. We verify this below:
\begin{itemize}
  \item The set $A+A$ is equal to $[\lceil \frac{n}{2} \rceil +t+1, \lceil \frac{n}{2} \rceil + t+2d-3]$, hence its elements are located immediately after $C$ and before the first element $-b_l$ of $-B$, so they do not belong to $S$.
  \item By Claim~\ref{claim:-B,A+B}, the set $A+B$ is equal to $[ \lceil \frac{n}{2} \rceil + t+2d-2, \frac{1}{2}( \lfloor \frac{3n}{2} \rfloor -t-1)-d+1] \setminus (-B)$, hence its elements are located after $C$, before $-A$, and do not include elements of $-B$, so they do not belong to $S$.
  \item The set $B+B$ is a sum of an arithmetic progression of difference $d$ and length $k-3$ with itself, hence it forms an arithmetic progression of difference $d$ of length $2k-7$. The first element of $B+B$ is $\lceil \frac{n}{2} \rceil+t+4d-3$, located on the right of $C$, and its last element, using~\eqref{eq:b_l}, is $2 \lfloor \frac{n}{2} \rfloor -2t-4d+4 < n$. Hence, it suffices to verify that $B+B$ does not intersect $-B$ and $-A$. The first element of $-B$ is $\lceil \frac{n}{2} \rceil+t+2d-2$. Observe that it differs from the first element of $B+B$ by $2d-1$, which is not divisible by $d$. Since $-B$ is an arithmetic progression of difference $d$, it follows that $B+B$ does not intersect it.
      Now, let $a_0$ denote the first element of $A$.
      Using~\eqref{eq:b_l}, the sum of the first and last elements of $B$ is
      \[  \Big( \frac{1}{2} \Big (\Big \lceil \frac{n}{2} \Big \rceil+t+1 \Big ) + 2d-2  \Big) + \Big( \Big \lfloor \frac{n}{2} \Big \rfloor - t -2d+2 \Big) = \frac{1}{2} \Big( \Big \lfloor \frac{3n}{2} \Big \rfloor -t+1 \Big ) = -a_0+1. \]
      This implies that the element consecutive to the interval $-A = [-a_0-d+2,-a_0]$ lies in $B+B$.
      Since the size of $-A$ is $d-1$ and the arithmetic progression $B+B$ has difference $d$, we get that $B+B$ does not intersect $-A$.
  \item By Claim~\ref{claim:B+C}, the set $B+C$ is equal to $[ \frac{1}{2} (\lfloor \frac{3n}{2} \rfloor-t+1 ) + 2d-2 ,   n-2d+2 ]$, hence its elements are located on the right of $-A$ and do not belong to $S$.
  \item The set $A+C$ is equal to $[ \frac{1}{2}(\lfloor \frac{3n}{2} \rfloor-t+1), \frac{1}{2}(3\lceil \frac{n}{2} \rceil+3t+1) + d-2]$. It can be verified that the values of $n$ in~\eqref{eq:n_values} satisfy that the right edge of the interval is smaller than $n$. Hence, its elements are located on the right of $-A$ and do not belong to $S$.
  \item The set $C+C$ is equal to $[0, |C|-1] \cup [n-|C|+1,n-1]$. By~\eqref{eq:n_values}, it is clear that $n \geq 6t+18$, hence the first element of $A$ satisfies
  \[a_0 = \frac{1}{2} \Big ( \Big \lceil \frac{n}{2} \Big  \rceil+t+1 \Big) \geq \frac{1}{2}(4t+10) = 2t+5 > |C|. \]
  It follows that the elements of $C+C$ are located on the left of $A$ and on the right of $-A$, so they do not belong to $S$.
\end{itemize}
This completes the proof of the lemma.
\end{proof}

We can now show that for every sufficiently large integer $n$ there exist symmetric complete sum-free subsets of $\Z_n$ of various sizes.

\begin{theorem}\label{thm:various}
There exist constants $c_1,c_2,c_3>0$ such that for every sufficiently large integer $n$ there exists a collection of symmetric complete sum-free subsets of $\Z_n$ whose sizes form an arithmetic progression with first element at most $c_1 \cdot \sqrt{n}$, difference at most $c_2 \cdot \sqrt{n}$, and last element at least $\frac{n}{3}-c_3 \cdot \sqrt{n}$.
\end{theorem}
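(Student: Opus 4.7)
The plan is to apply Theorem~\ref{thm:complete_small} with the parameter $d$ held fixed at a value $d_0 = \Theta(\sqrt n)$ while $(k,t)$ sweeps a one-parameter integer family that keeps $n$ constant. Observe that for either parity of $n$, the constraint linking the parameters to $n$ is $4dk+6t=n+c$ with $c\in\{11,14\}$ depending on parity, while the resulting set has size $2(d+k+t)+O(1)$ (the additive constant being $-6$ or $-7$ according to whether $|C|=2t+2$ or $2t+1$). For fixed $d_0$, the integer pairs $(k,t)$ satisfying $4d_0k+6t=n+c$ form a one-dimensional lattice generated by the minimal step $(k,t)\mapsto(k+3,\,t-2d_0)$, which preserves $n$ and shifts the size by $6-4d_0$. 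Iterating this step along the lattice therefore yields an arithmetic progression of valid sizes with common difference $4d_0-6=\Theta(\sqrt n)$.

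To realize the endpoints, I would start the iteration at the extreme pair $(k_0,t_0)=(4,\,(n+c-16d_0)/6)$, corresponding to the smallest $k$ and the largest admissible $t$. A direct substitution gives the initial size as $(n+c+6-10d_0)/3$, which for $d_0=\Theta(\sqrt n)$ lies within $O(\sqrt n)$ of $n/3$; this will supply the top of the arithmetic progression. The iteration then runs downward in $t$ as long as Theorem~\ref{thm:complete_small}'s hypothesis $d_0\leq 2t+O(1)$ is preserved. At the terminal index $t=O(d_0)=O(\sqrt n)$, and since $4d_0 k=n+c-6t$ forces $k=O(\sqrt n)$, the terminal size $2(d_0+k+t)+O(1)=O(\sqrt n)$ supplies the bottom of the progression. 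Reversing the order produces the claimed collection, with the constants $c_1,c_2,c_3$ emerging as explicit bounds from these estimates.

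The main obstacle is integrality of $t_0$: we require $16 d_0\equiv n+c\pmod 6$, equivalently $4d_0\equiv n+c\pmod 6$. As $d_0$ runs over the three residue classes modulo $3$, the value $4d_0\bmod 6$ takes exactly the three even residues $\{0,2,4\}$; since $n+c$ is always even (the choice between $c=11$ and $c=14$ is precisely what ensures this), exactly one residue class of $d_0$ modulo $3$ is admissible. I would therefore take $d_0$ to be the least integer $\geq\lceil\sqrt n\rceil$ in this admissible class, which still satisfies $d_0=\Theta(\sqrt n)$ for $n$ large. The subsequent $t_j=t_0-2d_0 j$ are then automatically integers, and what remains are routine verifications that $k\geq 4$, $t\geq 1$, and $d_0\leq 2t+O(1)$ hold throughout the iteration; all of these are immediate from the explicit formulas once $n$ is sufficiently large.
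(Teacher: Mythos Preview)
Your proposal is correct and follows essentially the same approach as the paper: fix $d_0\approx\sqrt n$, then use the lattice step $(k,t)\mapsto(k\pm 3,\,t\mp 2d_0)$ preserving $4d_0k+6t=n+c$ to sweep out an arithmetic progression of sizes with common difference $4d_0-6$. The only differences are cosmetic: the paper iterates in the opposite direction (starting from a small-$t$ endpoint found by a modular argument and increasing $t$ until $k$ hits $4$), and it pins $d_0\equiv 1\pmod 3$ rather than choosing the residue class of $d_0$ to make $(n+c-16d_0)/6$ an integer; your handling of integrality is arguably a little cleaner.
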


\begin{proof}
Let $n$ be a sufficiently large integer.
Define $d_0$ to be the unique integer in $\lfloor \sqrt{n} \rfloor - \{0,1,2\}$ that satisfies $d_0 = 1~(\mod ~3)$, and notice that $\sqrt{n} - 3 \leq d_0 \leq \sqrt{n}$.
Define $a=11$ if $n$ is odd and $a=14$ if $n$ is even. As $n$ is sufficiently large, one can write $n = 2m-a$ for a positive integer $m$.
Let $m'$ be the integer satisfying $m' = m ~(\mod~2d_0)$ and $0 \leq m' < 2d_0$.
The choice of $d_0$ guarantees that $3$ divides one of $m'+2d_0, m'+4d_0, m'+6d_0$. Denote by $3t_0$ the one divided by $3$, and notice that $2d_0 \leq 3t_0 \leq 8d_0$, implying $2 \sqrt{n}/3 -2 \leq t_0 \leq 8\sqrt{n}/3$ and $d_0 \leq 2t_0+1$.
As $m = 3t_0~(\mod~2d_0)$, it follows that there exists an integer $k_0$ for which $m = 2d_0k_0 +3t_0$, thus
\begin{eqnarray}\label{eq:n}
n = 4d_0k_0 +6t_0-a.
\end{eqnarray}
Using again the assumption that $n$ is sufficiently large, we have $t_0 \geq 1$, $d_0 \geq 2$, $k_0 \geq 4$ and $k_0 \leq n/(4d_0) \leq \sqrt{n}/3$.
Applying Theorem~\ref{thm:complete_small} to the integers $t_0, d_0, k_0$, we get that there exists a symmetric complete sum-free subset of $\Z_n$, whose size $s$, using~\eqref{eq:size_S}, satisfies $s = 2(d_0+k_0+t_0)-7$ if $n$ is even and $s = 2(d_0+k_0+t_0)-6$ if $n$ is odd. In both cases we have $s \leq c_1 \cdot \sqrt{n}$ for a universal constant $c_1>0$.

Now, define $b = \Big \lfloor \frac{k_0-4}{3} \Big \rfloor$.
For every integer $0 \leq i \leq b$ denote
\[k_i = k_0-3 \cdot i \mbox{~~~and~~~} t_i = t_0+2d_0 \cdot i. \]
Observe, using~\eqref{eq:n}, that for every such $i$ we have $n = 4d_0k_i +6t_i-a$, $k_i \geq k_0-3b \geq 4$, and $d_0 \leq 2t_i+1$.
Applying Theorem~\ref{thm:complete_small} to the integers $t_i, d_0, k_i$, we get that there exists a symmetric complete sum-free subset of $\Z_n$, whose size, as follows from~\eqref{eq:size_S}, is $s+i \cdot 2(2d_0-3)$. Denote $r = 2(2d_0-3)$, and notice that $r \leq c_2 \cdot \sqrt{n}$ for a universal constant $c_2>0$. The integers $s+i \cdot r$, for $0 \leq i \leq b$, form an arithmetic progression, each of its elements is a size of a symmetric complete sum-free subset of $\Z_n$. To complete the proof, observe that its last element satisfies
\[ s+ b \cdot r \geq \frac{k_0-6}{3} \cdot 2(2d_0-3) = \frac{1}{3}(4d_0k_0-24d_0-6k_0+36) \geq \frac{n}{3} - c_3 \cdot \sqrt{n}\]
for a universal constant $c_3>0$, and we are done.
\end{proof}

Theorem~\ref{thm:SmallIntro} is an immediate consequence of Theorem~\ref{thm:various}. We turn to derive Theorem~\ref{thm:DenseIntro}.

\begin{proof}[ of Theorem~\ref{thm:DenseIntro}]
Let $0 \leq \alpha \leq \frac{1}{3}$ and $\eps >0$ be fixed constants.
Let $n$ be a sufficiently large integer.
By Theorem~\ref{thm:various}, for universal constants $c_1,c_2,c_3>0$,
there exists a collection of symmetric complete sum-free subsets of $\Z_n$ whose sizes form an arithmetic progression with first element at most $c_1 \cdot \sqrt{n}$, difference at most $c_2 \cdot \sqrt{n}$, and last element at least $\frac{n}{3}-c_3 \cdot \sqrt{n}$.
Let $S$ be a set in this collection whose size is closest to $\alpha \cdot n$. Then, for $c = \max(c_1,c_2/2,c_3)$, we have
\[\alpha \cdot n - c \cdot \sqrt{n} \leq |S| \leq \alpha \cdot n + c \cdot \sqrt{n}.\]
Assuming that $n$ is sufficiently large, so that $\eps \geq c/\sqrt{n}$, it follows that
\[ \alpha -\eps \leq \frac{|S|}{n} \leq \alpha+\eps,\]
as required.
\end{proof}

\section*{Acknowledgements}
We thank Peter Cameron, Grahame Erskine, David Grynkiewicz, Sarah Hart, Karen Seyffarth and Olof Sisask for their kind help with the literature and for useful discussions.

\bibliographystyle{abbrv}

\end{document}